\newtheorem{theorem}{Theorem}[section]
\newaliascnt{corx}{thmx}
\newaliascnt{lemma}{theorem}
\newtheorem{lemma}[lemma]{Lemma}
\newaliascnt{proposition}{theorem}
\newtheorem{proposition}[proposition]{Proposition}
\newaliascnt{corollary}{theorem}
\newtheorem{corollary}[corollary]{Corollary}
\newaliascnt{conjecture}{theorem}
\newtheorem{conjecture}[conjecture]{Conjecture}
\newaliascnt{example}{theorem}
\newaliascnt{question}{theorem}
\theoremstyle{definition}
\newtheorem*{definition*}{Definition}
\newtheorem*{example*}{Example}
\newtheorem*{examples*}{Examples}
\newcommand{\B}{{\mathbb B}}
\newcommand{\R}{{\mathbb R}}
\newcommand{\Rd}{{{\mathbb R}^d}}
\newcommand{\Rn}{{{\mathbb R}^n}}
\newcommand{\Sph}{{\mathbb S}}
\DeclareMathOperator{\dist}{dist}
\DeclareMathOperator{\diam}{diam}
\DeclareMathOperator{\capd}{Cap_\mathit{d}}
\DeclareMathOperator{\capp}{Cap_\mathit{p}}
\DeclareMathOperator{\capntwo}{Cap_{\mathit{n}-2}}
\newcommand{\arxiv}[1]{%
\href{https://arxiv.org/abs/#1}{ArXiv:#1}}
\def\ddefloop#1{\ifx\ddefloop#1\else\ddef{#1}\expandafter\ddefloop\fi}
\def\ddef#1{\expandafter\def\csname bb#1\endcsname{\ensuremath{\mathbb{#1}}}}
\def\ddef#1{\expandafter\def\csname c#1\endcsname{\ensuremath{\mathcal{#1}}}}
\title{Hausdorff measure and decay rate of Riesz capacity}
\author{Qiuling Fan and Richard S. Laugesen}
\email{qiuling2@illinois.edu, Laugesen@illinois.edu}
\address{University of Illinois, Urbana, IL 61801, USA}
\keywords{Strongly rectifiable, strictly self-similar}
\subjclass[2020]{\text{Primary 31B15. Secondary 28A78}}
\begin{document}

\begin{abstract}
The decay rate of Riesz capacity as the exponent increases to the dimension of the set is shown to yield Hausdorff measure. The result applies to strongly rectifiable sets, and so in particular to submanifolds of Euclidean space. For strictly self-similar fractals, a one-sided decay estimate is found. Along the way, a purely measure theoretic proof is given for subadditivity of the reciprocal of Riesz energy. 
\end{abstract}

\maketitle

\section{\bf Introduction and results}
\label{sec:intro}

The Riesz kernel $1/|x-y|^p$ allows for energy interactions more general than the electrostatic  Coulomb repulsion. The $p$-capacity generated by the kernel can be used to measure the size of a compact set in $\Rn$. How does this Riesz capacity relate to other notions of size of the set, in particular to its measure? Since the set can have dimension smaller than the ambient dimension $n$, one intends here the appropriate Hausdorff measure. 

We show for a class of sets including smooth submanifolds that as $p$ increases to the dimension of the set, the Hausdorff measure arises from the decay rate of Riesz $p$-capacity. More precisely, it is the slope of capacity raised to the power $p$. To state the result precisely, we need some definitions. 
\begin{definition*}[Riesz energy and capacity]
Consider a nonempty compact subset $E$ of $\R^n, n \geq 1$. The Riesz $p$-energy of $E$ is
\begin{equation} \label{eq:energydef}
V_p(E) = \min_\mu \int_E \! \int_E |x-y|^{-p} \, d\mu(x) d\mu(y) , \qquad p > 0 ,
\end{equation}
where the minimum is taken over all probability measures on $E$. For the empty set, define $V_p(\emptyset)=+\infty$. The Riesz $p$-capacity is 
\begin{equation} \label{eq:capdef}
\capp(E) = V_p(E)^{-1/p} .
\end{equation}
\end{definition*}
Notice the energy is positive or $+\infty$ and so the capacity is positive or zero. The energy minimum in \eqref{eq:energydef} is known to be attained by some ``equilibrium'' measure $\mu$, by an application of the Helly selection principle, in other words, by weak-$*$ compactness of the collection of probability measures on the set \cite[Lemma 4.1.3]{BHS19}. The equilibrium measure is unique if the energy is finite, although we will not need that fact. 

The capacity is positive if and only if the energy is finite. The classical Newtonian energy $V_{n-2}(E)$ and Newtonian capacity $\capntwo(E)$ arise when $n \geq 3$ and $p=n-2$. Capacity can be regarded as measuring the size of the set, since capacity increases as the set gets larger and it scales linearly under dilation, with $\capp(sE)=s \capp(E)$ when $s>0$. 

Write $\cH^d$ for $d$-dimensional Hausdorff measure, normalized to agree with Lebesgue measure when applied to subsets of $\Rd$. Hausdorff dimension is denoted ``$\dim$''. 

\subsection{Results for rectifiable sets}
A definition by Calef and Hardin \cite[Definition 1.1]{CH09} calls a set strongly rectifiable if it can be covered by  almost-flat pieces except for an omitted set of lower dimension. 
\begin{definition*}[Strongly rectifiable sets]
Let $1 \leq d \leq n$ be integers. Call a set $E\subset \bbR^n$ strongly $d$-rectifiable if for each $\epsilon>0$ there exists a finite collection of compact subsets $K_1,\ldots,K_m \subset \bbR^d$ and corresponding bi-Lipschitz functions $\varphi_i: K_i\to E$ such that:
	\begin{itemize}
		\item[\small $\bullet$] each $\varphi_i$ has bi-Lipschitz constant less than $1+\epsilon$, 
		\item[\small $\bullet$] $\cH^d(E_i \cap E_j)=0$ when $i\neq j$, 
		\item[\small $\bullet$] $\dim(F) < d$, where $F=E\backslash \cup_{i=1}^m E_i$ is the portion of $E$ not covered by the images of the $E_i$,
	\end{itemize}
and where $E_i = \varphi_i(K_i)$ is compact.
\end{definition*}
Such an $E$ necessarily has Hausdorff dimension $\leq d$, and has finite measure: 
\[
\cH^d(E)<\infty . 
\]
Examples of strongly $d$-rectifiable sets include smooth $d$-dimensional submanifolds, and also finite unions of such submanifolds provided they intersect in sets of zero $d$-dimensional measure \cite[Section 9.5]{BHS19}. The strongly rectifiable concept is most useful when $d < n$, because when $d=n$, every compact set $E$ is strongly $n$-rectifiable simply by taking $K_1$ to equal $E$ itself. 

Now we state the main result, obtaining Hausdorff measure from $p$-capacity. 
\begin{theorem}[Hausdorff measure from decay rate of Riesz capacity] \label{th:main}
Let $1 \leq d \leq n$ be positive integers. If $E\subset \bbR^n$ is compact and strongly $d$-rectifiable then
\begin{equation} \label{eq:main}
		\lim_{p\nearrow d} \frac{\capp(E)^p}{d-p} = \frac{\cH^d(E)}{|\Sph^{d-1}|} .
\end{equation}
\end{theorem}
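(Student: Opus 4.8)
The plan is to deduce \eqref{eq:main} from the two one-sided estimates $\limsup_{p\nearrow d}(d-p)V_p(E)\le|\Sph^{d-1}|/\cH^d(E)$ and $\liminf_{p\nearrow d}(d-p)V_p(E)\ge|\Sph^{d-1}|/\cH^d(E)$, where I write $V_p(E)=\min_\mu I_p(\mu)$ with $I_p(\mu)$ the double integral in \eqref{eq:energydef}; since $\capp(E)^p=V_p(E)^{-1}$, the two estimates together give \eqref{eq:main}. The degenerate case $\cH^d(E)=0$ is covered by the second estimate (which then asserts $(d-p)V_p(E)\to+\infty$), so I assume $\cH^d(E)>0$ below.

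For the first estimate I would test the energy against the normalized restricted Hausdorff measure $\nu=\cH^d|_E/\cH^d(E)$, so that $V_p(E)\le I_p(\nu)=\cH^d(E)^{-2}\int_E W_p(x)\,d\cH^d(x)$ with $W_p(x)=\int_E|x-y|^{-p}\,d\cH^d(y)$. Two facts carry the argument. First, pushing $E_i\cap B(x,r)$ back through $\varphi_i^{-1}$ into a ball of $\bbR^d$ and summing over $i$ gives an upper Ahlfors bound $\cH^d(E\cap B(x,r))\le Cr^d$ for all $x,r$, with $C$ depending only on $E$; a layer-cake estimate then bounds $(d-p)W_p(x)$ by a constant uniformly in $x$ and in $p$ near $d$. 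Second, at $\cH^d$-a.e.\ $x\in E$ one has $\cH^d(E\cap B(x,r))/(\omega_d r^d)\to1$ as $r\to0$, where $\omega_d$ is the volume of the unit ball in $\bbR^d$ (the density theorem for $d$-rectifiable sets of finite measure, with our normalization of $\cH^d$); the same layer-cake computation then yields $\limsup_{p\nearrow d}(d-p)W_p(x)\le d\,\omega_d=|\Sph^{d-1}|$ for $\cH^d$-a.e.\ $x$. The reverse Fatou lemma now gives $\limsup_{p\nearrow d}(d-p)I_p(\nu)\le|\Sph^{d-1}|/\cH^d(E)$.

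For the second estimate I would first prove, for a compact set $A\subset\bbR^d$, that $\liminf_{p\nearrow d}(d-p)V_p(A)\ge|\Sph^{d-1}|/|A|$ with the estimate \emph{uniform over all probability measures} on $A$, so that it survives passing to the $p$-dependent minimizer. The subordination identity $|z|^{-p}=\Gamma(p/2)^{-1}\int_0^\infty t^{p/2-1}e^{-t|z|^2}\,dt$ combined with the heat-semigroup identity $\iint e^{-t|x-y|^2}\,d\mu\,d\mu=(\pi/t)^{d/2}\|P_{1/(8t)}\mu\|_{L^2(\bbR^d)}^2$ ($P_s$ the Gauss--Weierstrass semigroup), and the monotonicity of $s\mapsto\|P_s\mu\|_2^2$, give after restricting the $t$-integral to $[T,\infty)$ the bound $(d-p)I_p(\mu)\ge 2\pi^{d/2}\Gamma(p/2)^{-1}T^{(p-d)/2}\|P_{1/(8T)}\mu\|_2^2$. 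For $\mu$ supported in $A$, Cauchy--Schwarz over a $\delta$-neighbourhood $A_\delta$, together with the fact that $P_s\mu$ leaks out of $A_\delta$ by an amount tending to $0$ as $s\to0$ uniformly in $\mu$, forces $\|P_s\mu\|_2^2\ge(1-o(1))^2/|A_\delta|$; letting $p\nearrow d$, then $T\to\infty$, then $\delta\to0$ (using $|A_\delta|\downarrow|A|$ for compact $A$) gives the flat estimate. To transfer it to the strongly $d$-rectifiable $E\subset\bbR^n$ I would invoke subadditivity of the reciprocal Riesz energy, i.e.\ $\capp\!\big(\textstyle\bigcup_k G_k\big)^p\le\sum_k\capp(G_k)^p$ when the $G_k$ overlap in $\cH^d$-null sets, applied to the decomposition $E=F\cup\bigcup_{i=1}^m E_i$. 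Since $\dim F<d$ one has $\capp(F)=0$ once $p>\dim F$; the bi-Lipschitz bound yields $V_p(E_i)\ge(1+\epsilon)^{-p}V_p(K_i)$, hence $(d-p)\capp(E_i)^p\le(1+\epsilon)^p(d-p)\capp(K_i)^p$, and the flat estimate on $K_i\subset\bbR^d$ together with $\cH^d(K_i)\le(1+\epsilon)^d\cH^d(E_i)$ and $\sum_i\cH^d(E_i)=\cH^d(E)$ gives $\limsup_{p\nearrow d}(d-p)\capp(E)^p\le(1+\epsilon)^{2d}\cH^d(E)/|\Sph^{d-1}|$. Letting $\epsilon\to0$ finishes this direction.

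The main obstacle is the lower-bound direction. Within $\bbR^d$ the difficulty is that the energy minimizer moves with $p$, so the heat-kernel estimate must be genuinely uniform over all probability measures on $A$; the monotonicity of $\|P_s\mu\|_2^2$ and the uniform leakage control for $P_s\mu$ are precisely what let the $\liminf$ pass to the minimizer. The transfer to a rectifiable $E$ rests on the subadditivity of $1/V_p$ for pieces meeting only in $\cH^d$-null sets, which is delicate because a finite-energy measure need not ignore a set of Hausdorff dimension $d$ and zero $\cH^d$-measure; this measure-theoretic lemma --- the one flagged in the abstract --- is the step I expect to require the most care.
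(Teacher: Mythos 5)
Your proposal is correct in substance, and it is worth recording where it parts company with the paper. For the upper bound $\limsup_{p\nearrow d}(d-p)V_p(E)\le|\Sph^{d-1}|/\cH^d(E)$, the paper (\autoref{pr:upper}) excises a small neighborhood of the intersection set $\cup_{i<j}(E_i\cap E_j)$ so that the remaining pieces are separated by a positive distance, and then applies the local potential estimate of \autoref{lm:bound} piece by piece; you instead keep all of $E$, test with normalized $\cH^d|_E$, and combine a uniform upper Ahlfors bound with the a.e.\ density statement $\cH^d(E\cap\bbB^n(x,r))/(|\bbB^d|\,r^d)\to 1$ and reverse Fatou. That is essentially the mechanism of the paper's \autoref{th:cap_easydirc} and \autoref{cor:cap_easydirc} combined with \autoref{le:rectifiabledensity}, so it is valid; it trades the elementary separation trick for the density theorem (which for strongly rectifiable sets has the short chart-based proof in \autoref{le:rectifiabledensity}). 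For the lower bound your architecture coincides with the paper's \autoref{pr:lower}: flat case on $K_i\subset\bbR^d$, bi-Lipschitz transfer, discard $F$, recombine by subadditivity of $1/V_p$. The genuine difference is that you prove the flat case yourself via heat-kernel subordination --- the identity $\iint e^{-t|x-y|^2}\,d\mu\,d\mu=(\pi/t)^{d/2}\|P_{1/(8t)}\mu\|_2^2$, monotonicity of $s\mapsto\|P_s\mu\|_2$, and uniform control of the leaked mass do give the bound $\liminf_{p\nearrow d}(d-p)V_p(A)\ge|\Sph^{d-1}|/|A|$ uniformly in the measure --- whereas the paper simply cites Clark--Laugesen \cite[Corollary 1.2]{CL24b}. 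Your self-contained flat-case argument is a real plus. The step you flag as most delicate, subadditivity of reciprocal energy, is not actually a gap: it is classical (\cite{L72}, \cite{HK76}), and the paper's \autoref{pr:subadditivity} proves it \emph{unconditionally} for arbitrary measurable sets by disjointifying, discarding cross terms, and Cauchy--Schwarz; no hypothesis about $\cH^d$-null overlaps is needed, so the worry that a finite-energy measure might charge $E_i\cap E_j$ never enters.

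Two small repairs. First, the quantity in your concluding display should be $\capp(E)^p/(d-p)$, not $(d-p)\capp(E)^p$ (the latter tends to $0$ trivially); the intended chain is $\capp(E)^p\le\sum_i\capp(E_i)^p$, divide by $d-p$, and pass to the limsup. Second, $F=E\setminus\cup_i E_i$ need not be compact, so ``$\capp(F)=0$ once $p>\dim F$'' should be interpreted as ``every probability measure on $F$ has infinite $p$-energy''; the paper obtains this by exhausting $F$ with the compact sets $\{x\in E:\dist(x,\cup_i E_i)\ge\eta\}$ and invoking \cite[Theorem 4.3.1]{BHS19}, and you could alternatively cite the standard Frostman-type fact that a measure of finite $p$-energy forces Hausdorff dimension at least $p$.
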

Here $|\Sph^{d-1}|=2\pi^{d/2} / \Gamma(d/2)$ is the surface area of the unit sphere in $\Rd$. 
\begin{corollary} \label{co:zero}
$\capd(E)=0$.
\end{corollary}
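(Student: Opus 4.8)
The plan is to read the corollary off from \autoref{th:main}. Since $E$ is strongly $d$-rectifiable its Hausdorff measure is finite, so the right-hand side of \eqref{eq:main} is a finite constant $L=\cH^d(E)/|\Sph^{d-1}|$. Writing
\[
\capp(E)^p=(d-p)\cdot\frac{\capp(E)^p}{d-p}
\]
and letting $p\nearrow d$, the first factor tends to $0$ and the second to $L$, so $\capp(E)^p\to 0$. It remains to deduce that $\capd(E)=0$; this is where a left-semicontinuity of capacity in the exponent enters.

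I would argue by contradiction. Suppose $\capd(E)>0$; since capacity is positive exactly when energy is finite, $V_d(E)<\infty$, so by the Helly/weak-$*$ compactness fact cited above there is an equilibrium probability measure $\mu$ on $E$ with $\iint_E|x-y|^{-d}\,d\mu(x)\,d\mu(y)=V_d(E)<\infty$. Using $\mu$ as a competitor in \eqref{eq:energydef} at an exponent $p<d$ gives $V_p(E)\le\iint_E|x-y|^{-p}\,d\mu\,d\mu$. Now I pass to the limit $p\nearrow d$ in this integral by dominated convergence: on $\{|x-y|\le1\}$ one has $|x-y|^{-p}\le|x-y|^{-d}$, which is $\mu\times\mu$-integrable, and on $\{|x-y|>1\}$ one has $|x-y|^{-p}\le1$, which is integrable since $\mu\times\mu$ has total mass $1$; the integrand converges pointwise off the diagonal $\{x=y\}$, and that diagonal is $\mu\times\mu$-null since otherwise $V_d(E)$ would be infinite. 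Hence $\iint_E|x-y|^{-p}\,d\mu\,d\mu\to V_d(E)$, so $\limsup_{p\nearrow d}V_p(E)\le V_d(E)<\infty$, and therefore $\liminf_{p\nearrow d}\capp(E)^p=\liminf_{p\nearrow d}V_p(E)^{-1}\ge V_d(E)^{-1}>0$, contradicting $\capp(E)^p\to0$. Thus $\capd(E)=0$.

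The proof is short, and the only point requiring care is the passage to the limit inside the double integral: because $|x-y|^{-p}$ and $|x-y|^{-d}$ compare in opposite directions according to whether $|x-y|$ is less than or greater than $1$, the domain must be split (equivalently, one may first rescale $E$ to diameter at most $1$, which preserves strong $d$-rectifiability and scales $\capd(E)$ linearly, reducing to the case where $|x-y|^{-p}\le|x-y|^{-d}$ throughout). Everything else is bookkeeping with the definitions \eqref{eq:energydef} and \eqref{eq:capdef}.
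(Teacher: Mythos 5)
Your argument is correct, but it reaches the conclusion by a different route than the paper. Both proofs start from the same observation: \autoref{th:main} forces $\capp(E)^p=(d-p)\cdot\capp(E)^p/(d-p)\to 0$ as $p\nearrow d$, since $\cH^d(E)<\infty$. The difference lies in how $\capd(E)$ is tied to this limiting behavior. The paper quotes the Clark--Laugesen result that $p\mapsto\capp(E)$ is decreasing, so $\lim_{p\nearrow d}\capp(E)$ exists and is $\ge\capd(E)$; if it were positive the left side of \eqref{eq:main} would blow up, so it is zero and $\capd(E)=0$. You instead avoid the monotonicity citation entirely and prove by hand a one-sided continuity of the energy at $p=d$: assuming $\capd(E)>0$, you take a probability measure $\mu$ with $\iint|x-y|^{-d}\,d\mu\,d\mu=V_d(E)<\infty$, use it as a trial measure for $V_p$, and pass to the limit by dominated convergence (the split into $\{|x-y|\le 1\}$ and $\{|x-y|>1\}$, and the observation that the diagonal is $\mu\times\mu$-null, are exactly the points that need care, and you handle them correctly), obtaining $\limsup_{p\nearrow d}V_p(E)\le V_d(E)$ and hence $\liminf_{p\nearrow d}\capp(E)^p\ge V_d(E)^{-1}>0$, contradicting $\capp(E)^p\to 0$. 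What your version buys is self-containedness: it uses only the definitions \eqref{eq:energydef}--\eqref{eq:capdef}, attainment of the minimum, and measure theory, and in fact establishes the generally useful inequality $\limsup_{p\nearrow d}V_p(E)\le V_d(E)$ for any compact set. What the paper's version buys is brevity, at the cost of invoking an external monotonicity theorem. One tiny stylistic remark: you do not actually need the equilibrium measure to be a minimizer --- any probability measure of finite $d$-energy would do --- so the appeal to Helly selection could be replaced by the mere definition of $V_d(E)<\infty$ together with choosing a near-minimizer.
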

Since $\capd(E)=0$ by the corollary, the left side of \eqref{eq:main} can be interpreted as a limit of difference quotients for $p$-capacity to the power $p$. Hence \eqref{eq:main} says that the Hausdorff measure is determined by the slope of $p \mapsto \capp(E)^p$ at $p=d$. See \autoref{fig:capp} for a graphical illustration. 

The corollary is known already in greater generality because every $d$-dimensional set with $\cH^d(E)<\infty$ has $\capd(E)=0$, by \cite[Theorem 4.3.1]{BHS19}. 

The full-dimensional case of the theorem ($d=n$) says for compact $E \subset \Rn$ that 
\[
\lim_{p\nearrow n} \frac{\capp(E)^p}{n-p} = \frac{\cH^n(E)}{|\Sph^{n-1}|} . 
\]
That case of the theorem was proved by Clark and Laugesen \cite[Corollary 1.2]{CL24b}. 

The proof of \autoref{th:main} for dimensions $d\le n$, is in \autoref{sec:upper} and \autoref{sec:lower}. It builds on the case $d=n$ but requires several new ingredients, as follows. Recall that when $d<n$, the set $E$ decomposes into finitely many pieces. Those pieces can intersect, which we handle in \autoref{sec:upper} by removing a local neighborhood of the intersection points in order to eliminate energy interactions between those multiple pieces in the neighborhood. For the other direction of the proof, in \autoref{sec:lower}, we globally discard interaction energies between different pieces of the set and estimate only the self-interaction energies. This apparently wasteful technique turns out to suffice because by subadditivity of the reciprocal energy (for which we give a purely measure theoretic proof), one may recombine the estimates and show that the self-interaction terms dominate in the limit. 

An alternative proof of \autoref{th:main} could be constructed using results of Calef and Hardin \cite{CH09}. Specifically, one could use the inequalities in the proof of their Theorem 1.3, along with the result of that theorem that normalized Hausdorff measure is the weak-$*$ limit of $p$-equilibrium measure as $p \nearrow d$, to establish the formula in our equation \eqref{eq:main}. Such a proof would depend on the renormalized potential theory at $p=d$ that they develop in their paper, and hence would be more involved than the  direct approach in this paper. 

\begin{example*}  The unit sphere $\Sph^d$ is strongly $d$-rectifiable in $\Rn$ whenever $d<n$, since it is smooth and $d$-dimensional. Its Riesz $p$-capacity is 
\begin{equation} \label{eq:Rieszsphere}
\capp(\Sph^d) = 
2 \! \left( \frac{\Gamma(d-p/2) \Gamma(d/2)}{\Gamma((d-p)/2) \Gamma(d)} \right)^{\! \! 1/p} , \qquad 0<p<d, 
\end{equation}
by Borodachov, Hardin and Saff \cite[Proposition 4.6.4]{BHS19} or see Landkof \cite[p.{\,}163]{L72}; here the expression in \cite{BHS19} has been manipulated using the duplication formula \cite[5.5.5]{DLMF} to arrive at formula \eqref{eq:Rieszsphere}. For example, with $d=2$, formula \eqref{eq:Rieszsphere} gives 
\[
\capp(\Sph^2) = 2(1-p/2)^{1/p} , \qquad 0<p<2,
\]
and so $\capp(\bbS^2)^p/(2-p) = 2^{p-1}\to 2=|\bbS^2|/|\bbS^1|$ as $p\nearrow 2$, which confirms \autoref{th:main} in this case. A similar calculation works for all $d\ge 1$, as illustrated in \autoref{fig:capp}.
\end{example*}

\autoref{fig:cap} plots the capacity of the sphere as a function of $p$ for the first few values of $d$. \autoref{fig:capp} then illustrates the limit in \autoref{th:main} by plotting capacity raised to the power $p$ and showing the slope at $p=d$.

\begin{figure}[p]
  \centering
  \includegraphics[height=8.5cm]{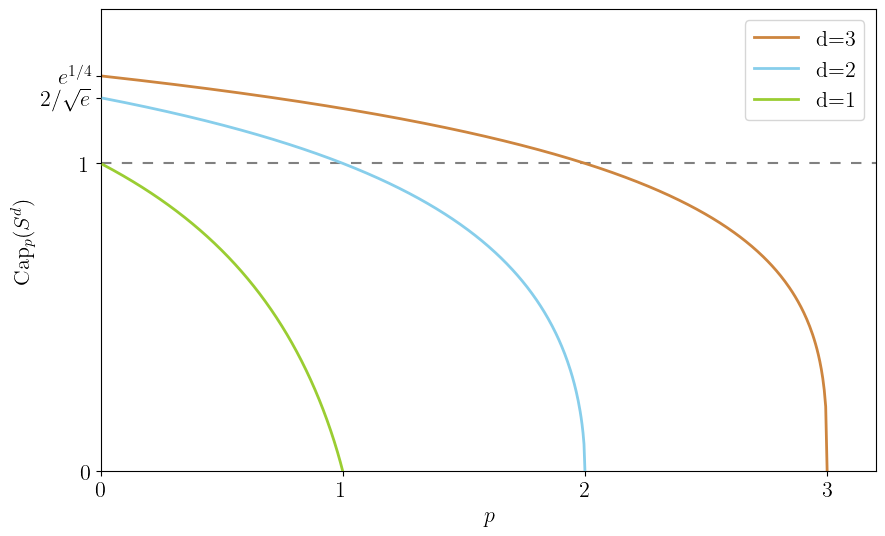}
  \caption{Plots of $p \mapsto \capp(\bbS^d)$ for spheres of dimension $d=1,2,3$, using formula \eqref{eq:Rieszsphere} from the Example. \label{fig:cap}}
\end{figure}

\begin{figure}[p]
  \centering
  \includegraphics[height=8.5cm]{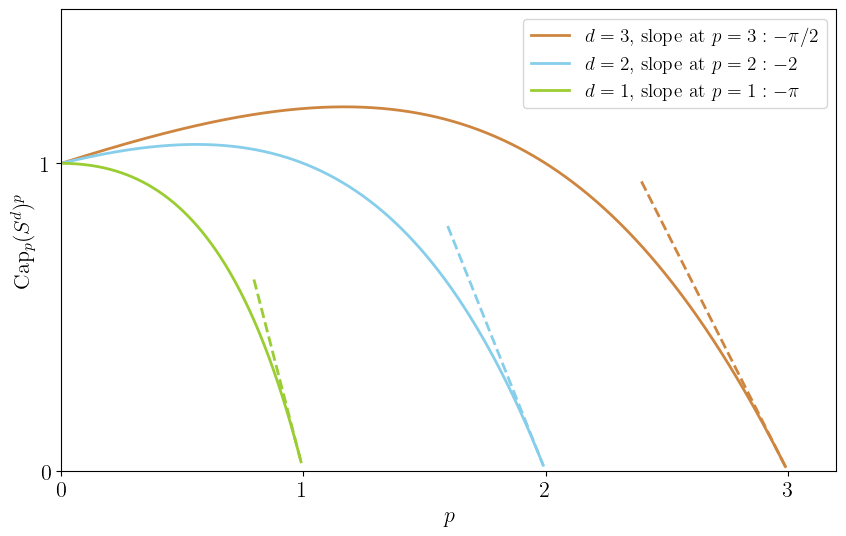}
  \caption{Plot of $p \mapsto \capp(\bbS^d)^p$ and its slope at $p=d$, for $d=1,2,3$. The plots confirm \autoref{th:main} for these spherical submanifolds. \label{fig:capp}}
\end{figure}

\subsection{Results for Ahlfors upper regular sets, including fractals}
Next we consider sets of real dimension $d$, which need not be an integer. After introducing the needed notions of density, we will relate the second order density to the limit of Riesz capacity as $p \nearrow d$. 
\begin{definition*}[Densities at dimension $d$]
Let $d > 0$. The first-order density at dimension $d$ of a finite Borel measure $\mu$ on $\bbR^n$ is	
\begin{align*}
		\rho_d(\mu,x) = \lim_{r\searrow0} \frac{\mu(\bbB^n(x,r))}{r^d} , \qquad x \in \bbR^n , 
	\end{align*}
assuming the limit exists and is finite. The second-order density is 
	\begin{align*}
		{\sigma}_{d}(\mu, x) 
		&=  \lim_{p\nearrow d} (d-p) \int_0^1 \mu(\bbB^n(x,r)) r^{-p-1}  \, dr , \qquad x \in \bbR^n , 
	\end{align*}
	again assuming the limit exists and is finite. The second-order upper density is 
	\begin{align*}
		\overline{\sigma}_{d}(\mu, x) 
		&=  \limsup_{p\nearrow d} \, (d-p) \int_0^1 \mu(\bbB^n(x,r)) r^{-p-1}  \, dr.
	\end{align*}
\qed
\end{definition*}
Excellent reference for densities and Hausdorff measure are the books by Falconer \cite{F97} and Z\"{a}hle \cite{Z24}. The second-order density can be expressed equivalently as 
	\begin{align*}
		\sigma_{d}(\mu, x) 
		&=  \lim_{\eta \searrow 0} \frac{1}{|\log \eta|}\int_{\eta}^1 \frac{\mu(\bbB^n(x,r))}{r^{d}}\, \frac{dr}{r} ,
	\end{align*}
although we will not need that formulation. The proof of this equivalence by Hinz \cite[Proposition 1.1(ii) and Theorem 1.1]{H05} can be found in Calef \cite[p.\,567]{C10}, all building on earlier work by Z\"ahle \cite[Proposition 3.1]{Z02}.

A helpful example is that Hausdorff measure on a strongly $d$-rectifiable set has constant first order density:
\begin{lemma} \label{le:rectifiabledensity}
	If $\mu$ is Hausdorff measure $\cH^d$ restricted to a strongly $d$-rectifiable set $E$, then the first-order density of $\mu$ equals $|\bbB|^d$ at $\cH^d$-almost every $x\in E$.
\end{lemma}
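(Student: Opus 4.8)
The plan is to localize the density computation to the bi-Lipschitz pieces of $E$ and then exploit that their bi-Lipschitz constants can be taken as close to $1$ as we like. Throughout, write $\mu=\cH^d$ restricted to $E$; this is a finite Borel measure since $\cH^d(E)<\infty$, and what we must show is that $\lim_{r\searrow0}\mu(\bbB^n(x,r))/r^d$ exists and equals $|\bbB|^d$ for $\cH^d$-a.e.\ $x\in E$. Fix $\epsilon>0$ and take a decomposition $E=F\cup\bigcup_{i=1}^m E_i$ as in the definition of strong $d$-rectifiability, with $E_i=\varphi_i(K_i)$, with $L:=1+\epsilon$ an upper bound for the bi-Lipschitz constants of the $\varphi_i$, with $\cH^d(E_i\cap E_j)=0$ for $i\ne j$, and with $\cH^d(F)=0$ (which follows from $\dim F<d$). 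Because $\cH^d$ agrees with Lebesgue measure $|\cdot|$ on $\R^d$, the bi-Lipschitz map $\varphi_i$ sends Lebesgue-null subsets of $K_i$ to $\cH^d$-null subsets of $E_i$; hence for $\cH^d$-a.e.\ $x\in E_i$ the unique preimage $u=\varphi_i^{-1}(x)$ is a point of Lebesgue density of $K_i$. Discarding in addition the $\cH^d$-null sets $F$ and $\bigcup_{i\ne j}(E_i\cap E_j)$, we are left with a subset of $E$ of full $\cH^d$-measure at whose points $x$ the following holds: $x$ lies in exactly one piece $E_i$, and since each $E_j$ with $j\ne i$ is compact and avoids $x$, there is $r_0>0$ with $\bbB^n(x,r)\cap E\subseteq E_i\cup F$ for all $r<r_0$; as $\cH^d(F)=0$, this gives $\mu(\bbB^n(x,r))=\cH^d\bigl(E_i\cap\bbB^n(x,r)\bigr)$ whenever $r<r_0$.

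Next I would transport the ball back to $K_i$. The bi-Lipschitz estimates give the inclusions
\[
\varphi_i\bigl(K_i\cap\bbB^d(u,r/L)\bigr)\ \subseteq\ E_i\cap\bbB^n(x,r)\ \subseteq\ \varphi_i\bigl(K_i\cap\bbB^d(u,Lr)\bigr),
\]
and since a $C$-Lipschitz map multiplies $\cH^d$ by a factor at most $C^d$ (applied to $\varphi_i$ and to $\varphi_i^{-1}$), these inclusions yield
\[
L^{-d}\,\bigl|K_i\cap\bbB^d(u,r/L)\bigr|\ \le\ \cH^d\bigl(E_i\cap\bbB^n(x,r)\bigr)\ \le\ L^{d}\,\bigl|K_i\cap\bbB^d(u,Lr)\bigr| .
\]
Dividing by $r^d$, letting $r\searrow0$, and using that $\bigl|K_i\cap\bbB^d(u,\rho)\bigr|/\rho^d\to|\bbB|^d$ at the density point $u$ by the Lebesgue density theorem, we obtain
\[
L^{-2d}\,|\bbB|^d\ \le\ \liminf_{r\searrow0}\frac{\mu(\bbB^n(x,r))}{r^d}\ \le\ \limsup_{r\searrow0}\frac{\mu(\bbB^n(x,r))}{r^d}\ \le\ L^{2d}\,|\bbB|^d
\]
for $\cH^d$-a.e.\ $x\in E$, with $L=1+\epsilon$.

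Finally I would let $\epsilon$ run through a sequence $\epsilon_k\searrow0$. For each $k$ the displayed two-sided bound holds with $L=1+\epsilon_k$ off an $\cH^d$-null set; the union over $k$ of these exceptional sets is still $\cH^d$-null, and at every $x$ outside it the fixed, $k$-independent quantities $\liminf_{r\searrow0}\mu(\bbB^n(x,r))/r^d$ and $\limsup_{r\searrow0}\mu(\bbB^n(x,r))/r^d$ lie in $\bigl[(1+\epsilon_k)^{-2d}|\bbB|^d,\,(1+\epsilon_k)^{2d}|\bbB|^d\bigr]$ for every $k$. Letting $k\to\infty$ squeezes both to $|\bbB|^d$, so the limit exists and equals $|\bbB|^d$ for $\cH^d$-a.e.\ $x\in E$, which is the claim. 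The genuinely delicate points are structural rather than computational: eliminating the contribution of the other pieces $E_j$ near a typical point of $E_i$ — this is precisely where the compactness of the $E_j$ and the zero-measure overlap hypothesis enter — and the fact that, since the decomposition depends on $\epsilon$, no single decomposition pins the density down exactly, forcing the concluding limit over $\epsilon_k\searrow0$; the remainder is a routine application of the Lebesgue density theorem together with the Lipschitz distortion bound for Hausdorff measure. (Alternatively, since a strongly $d$-rectifiable set is in particular countably $d$-rectifiable with finite $\cH^d$-measure, the conclusion also follows from the classical density theorem for rectifiable sets, but the direct argument above is shorter and self-contained.)
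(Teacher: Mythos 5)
Your proposal is correct and follows essentially the same route as the paper: fix a decomposition for each $\epsilon_k\searrow 0$, discard the null overlap set and the lower-dimensional remainder, use compactness to reduce $\mu(\bbB^n(x,r))$ to a single piece $E_i$ for small $r$, transport the ball through the bi-Lipschitz map to get the two-sided bound $(1+\epsilon_k)^{\pm 2d}|\bbB^d|$, and squeeze over $k$. Your explicit appeal to the Lebesgue density theorem at the preimage point $u\in K_i$ is a careful touch: it justifies replacing $\cH^d(K_i\cap\bbB^d(u,\rho))$ by the full ball measure in the lower bound, a step the paper's displayed inequality leaves implicit.
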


First order densities are stronger than second order, in the following sense.  
\begin{lemma}[see Falconer \protect{\cite[(6.22)]{F97}}] \label{le:secondfirst}
	Let $x \in \bbR^n$. If $\rho_d(\mu,x)$ exists then so does $\sigma_d(\mu,x)$, and the two numbers are equal.  
\end{lemma}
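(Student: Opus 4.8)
The plan is to reduce everything to the elementary fact that a pointwise limit survives an Abel-type average against the weight $r^{-p-1}$ as $p\nearrow d$. Write $m(r)=\mu(\bbB^n(x,r))$ and $\rho=\rho_d(\mu,x)$, so that by hypothesis $m(r)/r^d\to\rho$ as $r\searrow 0$; set $\varepsilon(r)=m(r)/r^d-\rho$, which tends to $0$ as $r\searrow 0$. First I would note that the integral defining $\sigma_d(\mu,x)$ converges: near $r=0$ the integrand $m(r)\,r^{-p-1}$ is at most a constant times $r^{d-p-1}$, which is integrable on $(0,1)$ because $d-p>0$.

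Next I would compute the main term. Since $\int_0^1 r^{d-p-1}\,dr = 1/(d-p)$,
\[
(d-p)\int_0^1 m(r)\, r^{-p-1}\, dr = \rho + (d-p)\int_0^1 \varepsilon(r)\, r^{d-p-1}\, dr ,
\]
so it remains to prove that the error integral tends to $0$ as $p\nearrow d$.

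For that, fix $\eta>0$ and choose $\delta\in(0,1]$ with $|\varepsilon(r)|<\eta$ for $0<r<\delta$, then split the integral at $\delta$. On $(0,\delta)$ the contribution is at most $(d-p)\eta\int_0^\delta r^{d-p-1}\,dr=\eta\,\delta^{d-p}\le\eta$. On $(\delta,1)$ I would use that $\mu$ is a finite measure: $m(r)\le\mu(\bbR^n)$ forces $|\varepsilon(r)|\le\mu(\bbR^n)/\delta^d+\rho\eqdef C_\delta$ there, so the contribution is at most $(d-p)C_\delta\int_\delta^1 r^{d-p-1}\,dr=C_\delta\bigl(1-\delta^{d-p}\bigr)\to 0$ as $p\nearrow d$, because $\delta^{d-p}\to 1$. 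Hence the $\limsup$ of the error integral as $p\nearrow d$ is at most $\eta$ for every $\eta>0$, so it is zero, which gives $\sigma_d(\mu,x)=\rho=\rho_d(\mu,x)$.

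The argument is routine; the only place that needs a moment's thought is the tail on $[\delta,1]$, where the first-order density hypothesis is unavailable (it only controls $m$ near $x$) and one must instead invoke the finiteness of the total mass $\mu(\bbR^n)$ to bound $\varepsilon$, and then observe that the weight $(d-p)\,r^{d-p-1}$ puts vanishing mass on any interval bounded away from $0$ as $p\nearrow d$.
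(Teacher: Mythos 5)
Your proof is correct. Note that the paper does not prove this lemma at all: it is quoted from Falconer \cite[(6.22)]{F97}, where the order-two density is formulated via the logarithmic average $\frac{1}{|\log\eta|}\int_\eta^1 \frac{\mu(\bbB^n(x,r))}{r^d}\frac{dr}{r}$, and the identification of that quantity with the $(d-p)\int_0^1\mu(\bbB^n(x,r))r^{-p-1}\,dr$ average used in the paper's definition of $\sigma_d$ is a separate equivalence (Hinz, Z\"ahle, Calef) that the authors mention but do not need. You instead prove the statement directly in the form in which the paper states it: writing $\mu(\bbB^n(x,r))r^{-p-1}=(\rho+\varepsilon(r))r^{d-p-1}$, using $(d-p)\int_0^1 r^{d-p-1}\,dr=1$, and splitting at a small $\delta$ so that the near-origin piece is controlled by $\sup_{(0,\delta)}|\varepsilon|$ while the piece on $[\delta,1]$ is killed because the normalized weight $(d-p)r^{d-p-1}\,dr$ assigns mass $1-\delta^{d-p}\to 0$ to any interval bounded away from $0$; the finiteness of $\mu$ and of $\rho$ gives the needed uniform bound there, and monotonicity of $r\mapsto\mu(\bbB^n(x,r))$ gives measurability. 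This is a standard Abel-summation argument, and it buys a self-contained proof that avoids both the citation and the detour through the logarithmic-average formulation; the cited route, on the other hand, plugs into the broader average-density framework used elsewhere in the fractal literature.
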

The next definition controls the rate of growth of Hausdorff measure near $x$. 
\begin{definition*}[Ahlfors upper $d$-regular set]
	Let $d>0$ and $n \geq 1$. A set $A \subset \Rn$ is said to be upper Ahlfors $d$-regular if a constant $C>0$ exists such that 
\begin{align*}
\cH^d(\bbB^n(x,r)\cap A) \leq C r^d 
\end{align*}
for all $x \in A$ and $r \in (0,\diam A]$. 
\end{definition*}
The next theorem gets a lower bound on the decay of Riesz capacity as $p \nearrow d$. It is proved in \autoref{sec:easydircproof}. 
\begin{theorem}[Second order upper density and decay of Riesz capacity] \label{th:cap_easydirc}
Let $d>0$ and $n \geq 1$. If $E \subset \bbR^n$ is compact and upper Ahlfors $d$-regular with positive $\cH^d$-measure then 
	\begin{align*}
		\liminf_{p\nearrow d} \frac{\capp(E)^p}{d-p} \ge \frac{\cH^d(E)^2/d}{\int_E \overline{\sigma}_{d}(\cH^d|_E,x) \, d\cH^d(x)} .
	\end{align*}
\end{theorem}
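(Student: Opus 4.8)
The plan is to bound the Riesz $p$-energy $V_p(E)$ from above by testing the minimization \eqref{eq:energydef} against a single natural competitor, the normalized Hausdorff measure $\mu = \cH^d|_E / \cH^d(E)$. This is a legitimate probability measure on $E$ since $\cH^d(E)>0$, and it turns out to have finite energy for $p<d$; note also that upper Ahlfors $d$-regularity forces $\cH^d(E) \le C(\diam E)^d<\infty$. Because $\capp(E)^p/(d-p) = 1/\big((d-p)V_p(E)\big)$, a lower bound on the left side of the theorem is equivalent to an upper bound on $(d-p)V_p(E)$, and the latter reduces to estimating the double integral $\iint_{E\times E}|x-y|^{-p}\,d\cH^d(y)\,d\cH^d(x)$.

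First I would rewrite the kernel using the identity $|x-y|^{-p} = p\int_{|x-y|}^{\infty} r^{-p-1}\,dr$, valid for $p>0$, and apply Tonelli to obtain
\[
\int_E\!\int_E |x-y|^{-p}\,d\cH^d(y)\,d\cH^d(x) = p\int_E \int_0^\infty \cH^d\big(\bbB^n(x,r)\cap E\big)\,r^{-p-1}\,dr\,d\cH^d(x).
\]
Splitting the inner integral at $r=1$, the part over $(1,\infty)$ is at most $\cH^d(E)\int_1^\infty r^{-p-1}\,dr = \cH^d(E)/p$, contributing at most $\cH^d(E)^2$ after the $x$-integration; and the part over $(0,1)$ is, up to the factor $p/(d-p)$, exactly the quantity $(d-p)\int_0^1 \cH^d(\bbB^n(x,r)\cap E)\,r^{-p-1}\,dr$ that enters the definition of the second-order upper density $\overline{\sigma}_d$. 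Dividing by $\cH^d(E)^2$ and multiplying by $(d-p)$, this yields for every $p\in(0,d)$
\[
(d-p)V_p(E) \le \frac{p}{\cH^d(E)^2}\int_E \Big((d-p)\!\int_0^1 \cH^d\big(\bbB^n(x,r)\cap E\big)\,r^{-p-1}\,dr\Big)\,d\cH^d(x) + (d-p),
\]
whose right-hand side is strictly positive, so $\capp(E)^p/(d-p)$ is at least its reciprocal.

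The remaining task is to take $\limsup_{p\nearrow d}$ of this right-hand side. The term $(d-p)$ vanishes and $p/\cH^d(E)^2 \to d/\cH^d(E)^2$, so it suffices to interchange $\limsup$ with the $x$-integral in the remaining term. Here upper Ahlfors $d$-regularity provides the required domination: $(d-p)\int_0^1 \cH^d(\bbB^n(x,r)\cap E)\,r^{-p-1}\,dr \le (d-p)\int_0^1 Cr^{d}r^{-p-1}\,dr = C$, uniformly in $x\in E$ and $p\in(0,d)$, and the constant $C$ is $\cH^d$-integrable over $E$ because $\cH^d(E)<\infty$. Choosing a sequence $p_k\nearrow d$ that realizes the outer $\limsup$ and applying the reverse Fatou lemma to the dominated sequence of integrands then gives
\[
\limsup_{p\nearrow d}\int_E \Big((d-p)\!\int_0^1 \cH^d\big(\bbB^n(x,r)\cap E\big)\,r^{-p-1}\,dr\Big)\,d\cH^d(x) \le \int_E \overline{\sigma}_{d}(\cH^d|_E,x)\,d\cH^d(x),
\]
since the $\limsup$ of the integrand is $\overline{\sigma}_{d}(\cH^d|_E,x)$ by definition. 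Combining the last two displays with $\liminf(1/a_p) = 1/\limsup a_p$ for positive $a_p$ produces $\liminf_{p\nearrow d}\capp(E)^p/(d-p) \ge (\cH^d(E)^2/d)\big/\!\int_E \overline{\sigma}_{d}(\cH^d|_E,x)\,d\cH^d(x)$, as claimed.

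The main obstacle is precisely the interchange of $\limsup$ and integration in the last display: this is the one place where the upper Ahlfors regularity hypothesis is essential, since it supplies the integrable majorant needed for the reverse Fatou lemma. Everything else — the Tonelli rearrangement, the split at $r=1$, the passage to a maximizing sequence $p_k$, and the final reciprocal manipulation — is routine.
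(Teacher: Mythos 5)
Your proof is correct and follows essentially the same route as the paper: test the energy with normalized Hausdorff measure via the kernel identity $|x-y|^{-p}=p\int_{|x-y|}^\infty r^{-p-1}\,dr$ (G\"otz's formula), split the radial integral at $r=1$, use upper Ahlfors regularity to supply the uniform majorant, and interchange $\limsup$ with the integral by dominated convergence/reverse Fatou. The only differences are cosmetic (you apply Tonelli directly to the chosen measure rather than quoting the general formula, and you spell out the reverse Fatou step along a maximizing sequence), so no further comment is needed.
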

One would like to prove a reverse inequality on the $\limsup$, for some suitable class of sets, thus getting equality in the limit. Our attempts have not been successful.  
\begin{corollary}\label{cor:cap_easydirc}
	If the Hausdorff measure $\cH^d|_E$ in \autoref{th:cap_easydirc} has second-order density that is constant $\cH^d$-a.e., denoted $\sigma_{d}(E)$, then 
	\begin{align*}
		\liminf_{p\nearrow d} \frac{\capp(E)^p}{d-p} \ge \frac{\cH^d(E)}{d \hspace*{1pt} \sigma_{d}(E)}.
	\end{align*}
\end{corollary}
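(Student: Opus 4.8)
The plan is to derive the corollary directly from \autoref{th:cap_easydirc} by evaluating the integral in its denominator under the constant-density hypothesis. The hypothesis is that the second-order density $\sigma_d(\cH^d|_E,x)$ exists and equals the constant $\sigma_d(E)$ for $\cH^d$-almost every $x\in E$. The first observation I would record is that wherever the second-order density exists, it agrees with the second-order upper density, since $\overline{\sigma}_d(\cH^d|_E,x)$ is simply the $\limsup$ of the quantity whose limit defines $\sigma_d(\cH^d|_E,x)$. Hence $\overline{\sigma}_d(\cH^d|_E,x)=\sigma_d(E)$ for $\cH^d$-a.e.\ $x\in E$.

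With that identification in hand, the integral appearing in \autoref{th:cap_easydirc} evaluates to
\[
\int_E \overline{\sigma}_{d}(\cH^d|_E,x)\,d\cH^d(x) = \sigma_d(E)\,\cH^d(E),
\]
where we use that $\cH^d(E)$ is finite (because $E$ is compact and upper Ahlfors $d$-regular) and positive (assumed). Substituting this into the lower bound of \autoref{th:cap_easydirc} gives
\[
\liminf_{p\nearrow d}\frac{\capp(E)^p}{d-p} \ge \frac{\cH^d(E)^2/d}{\sigma_d(E)\,\cH^d(E)} = \frac{\cH^d(E)}{d\,\sigma_d(E)},
\]
which is the asserted inequality.

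The only matters requiring a word of care are that $\sigma_d(E)$ be positive and finite, so that the quotient is meaningful: finiteness is built into the definition of the second-order density, while positivity follows from the standard lower density estimates forced by a nonzero finite Hausdorff measure (and could in any case be read as implicit in the statement), and that $x\mapsto\overline{\sigma}_d(\cH^d|_E,x)$ be measurable, which is routine. I do not expect any genuine obstacle: the entire content of the corollary is already contained in \autoref{th:cap_easydirc}, and what remains is the bookkeeping of specializing to the constant-density case.
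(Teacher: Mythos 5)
Your proposal is correct and follows essentially the same route the paper intends: since the second-order density exists and is constant $\cH^d$-a.e., the upper density $\overline{\sigma}_d(\cH^d|_E,x)$ equals $\sigma_d(E)$ a.e., so the integral in \autoref{th:cap_easydirc} evaluates to $\sigma_d(E)\,\cH^d(E)$ and the stated bound follows by substitution. The paper treats the corollary as exactly this immediate specialization, so no further comment is needed.
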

\begin{examples*}  
Suppose $E$ is a smooth submanifold of $\bbR^n$ with positive integer dimension $d$, or more generally suppose $E$ is a strongly $d$-rectifiable set. The second-order density of Hausdorff measure restricted to $E$ has the constant value $|\bbB^{d}|$, as is easily seen by approximating the submanifold locally with its tangent space. In the strongly rectifiable case, this formula follows from \autoref{le:rectifiabledensity} and \autoref{le:secondfirst}. Thus for these examples, the right side of \autoref{cor:cap_easydirc} equals $\cH^d(E)/ |\bbS^{d-1}|$, which matches the right side of \autoref{th:main}. 
\end{examples*}

\subsection*{Application to fractals}
The right side of \autoref{cor:cap_easydirc} can exceed the ``strongly rectifiable value'' $\cH^d(E)/ |\bbS^{d-1}|$ that appears on the right side of \autoref{th:main}, as we proceed to show for certain fractal sets.  

A compact set $A \subset \bbR^n$ is called a \emph{strictly self-similar fractal} if 
\begin{align*}
	A = \cup _{i=1}^N \, \varphi_i(A)
\end{align*} 
where $\varphi_i(x) = L_i U_i x + b_i$ for some $L_i \in (0,1)$, unitary matrix $U_i$, and offset $b_i \in \bbR^n$, and the sets $\{ \varphi_i(A )\}_{i=1}^N$ are disjoint. Strictly self-similar fractals possess several useful properties:
\begin{enumerate}
\item[(i)] the Hausdorff dimension $d>0$ of $A$ is determined by $\sum_{i=1}^N L_i^d = 1$ and the $d$-Hausdorff measure of $A$ is positive and finite by \cite[Thm.\ 2]{M46},
\item[(ii)] $A$ is Ahlfors $d$-regular by \cite[Lemma 3.3]{C10},
\item[(iii)] the second-order density $\sigma_{d}(\cH^d|_A, x)$ is positive, finite and constant $\cH^d$-a.e.\ by \cite[Thm.\ 1]{Z01}.
\end{enumerate}
This constant second order density value is denoted $\sigma_{d}(A)$. 

Due to these properties, \autoref{cor:cap_easydirc} immediately implies that: 
\begin{corollary}[Decay of Riesz capacity for fractals]\label{cor:decay}
	If $A$ is a strictly self-similar fractal with dimension $d$ then 
\[
		\liminf_{p\nearrow d} \frac{\capp(A)^p}{d-p} \ge \frac{\cH^d(A)}{d \hspace*{1pt} \sigma_{d}(A)}.
\]
\end{corollary}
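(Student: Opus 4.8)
The plan is to verify that a strictly self-similar fractal $A$ meets every hypothesis of \autoref{cor:cap_easydirc} and then simply invoke that corollary. By definition $A$ is compact. Property (i) tells us that the Hausdorff dimension of $A$ is the number $d>0$ determined by $\sum_{i=1}^N L_i^d=1$, and that $0<\cH^d(A)<\infty$; in particular $A$ has positive $\cH^d$-measure. Property (ii) says $A$ is Ahlfors $d$-regular, and since the two-sided Ahlfors bound includes the upper estimate $\cH^d(\bbB^n(x,r)\cap A)\le C r^d$ for $x\in A$ and $r\in(0,\diam A]$, the set $A$ is in particular upper Ahlfors $d$-regular, as required. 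Property (iii) provides that the second-order density $\sigma_d(\cH^d|_A,x)$ exists, is positive and finite, and takes a single constant value for $\cH^d$-almost every $x\in A$; we name this constant $\sigma_d(A)$, which is exactly the hypothesis under which \autoref{cor:cap_easydirc} is stated.

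With these facts assembled, I would apply \autoref{cor:cap_easydirc} with $E=A$ to obtain
\[
\liminf_{p\nearrow d}\frac{\capp(A)^p}{d-p}\ge\frac{\cH^d(A)}{d\,\sigma_d(A)},
\]
which is the assertion of the corollary. Since $\cH^d(A)$ and $\sigma_d(A)$ are both positive and finite, the right-hand side is a well-defined positive real number, so the bound is non-vacuous.

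There is no serious obstacle here: all the analytic content has already been carried out, partly in the structural results (i)--(iii) on strictly self-similar fractals cited from Moran, Calef, and Z\"ahle, and partly in \autoref{cor:cap_easydirc} together with \autoref{th:cap_easydirc}, which underlies it. The only minor points worth recording in the write-up are the implication ``Ahlfors $d$-regular $\Rightarrow$ upper Ahlfors $d$-regular'' noted above, and the remark that for genuine fractals the right-hand side $\cH^d(A)/(d\,\sigma_d(A))$ need not coincide with the ``strongly rectifiable value'' $\cH^d(A)/|\bbS^{d-1}|$ appearing in \autoref{th:main}, which is precisely why this corollary is stated separately.
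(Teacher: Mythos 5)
Your proposal is correct and matches the paper's own argument, which likewise deduces the statement immediately from \autoref{cor:cap_easydirc} using the listed properties (i)--(iii) of strictly self-similar fractals. The added remarks (upper regularity following from full Ahlfors regularity, positivity and finiteness of the right-hand side) are accurate and harmless.
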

Incidentally, \autoref{cor:cap_easydirc} applies also to self-similar sets in the sense of Z\"ahle \cite[Chapter 7]{Z24} and to self-conformal sets \cite{Z01}, since they too are known to be upper Ahlfors regular and have constant second-order density.
\begin{example*}
The middle-thirds Cantor set is a strictly self-similar fractal, as one verifies by choosing $L_1=L_2=1/3, U_1=U_2=1, b_1=0, b_2=2/3$. This set $A$ has dimension $d=(\log 2)/(\log 3)$ with $\cH^d(A)>0$ and second order density 
\[
\sigma_d(A) = 2^d (0.62344\ldots) \simeq 0.9654
\]
(see \cite[Theorem 6.6]{F97}, noting that the definition there of second order density is $2^{-d}$ times our definition). Hence for the Cantor set, the denominator on the right side of \autoref{cor:decay} is $d \hspace*{1pt} \sigma_{d}(A) \simeq 0.6091$. Meanwhile, the denominator on the right side of \autoref{th:main} is $2\pi^{d/2}/\Gamma(d/2) \simeq 1.0113$, which is larger. Hence the result of \autoref{th:main} for strongly rectifiable sets fails for some strictly self-similar fractal sets. 
\end{example*}
Does equality hold in \autoref{cor:decay}? We raise:
\begin{conjecture}\label{conj:capfractal}
	If $A$ is a strictly self-similar fractal with dimension $d$ then 
	\begin{align*}
		\lim_{p\nearrow d} \frac{\capp(A)^p}{d-p} = \frac{\cH^d(A)}{d \hspace*{1pt}\sigma_{d} (A)} .
	\end{align*}
\end{conjecture}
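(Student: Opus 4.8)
\emph{Toward a proof of \autoref{conj:capfractal}.} By \autoref{cor:decay} we already have $\liminf_{p\nearrow d}\capp(A)^p/(d-p)\ge\cH^d(A)/(d\,\sigma_d(A))$, so it suffices to prove the reverse bound for the $\limsup$; writing $V_p(A)=\capp(A)^{-p}$ this is the lower bound $\liminf_{p\nearrow d}(d-p)V_p(A)\ge d\,\sigma_d(A)/\cH^d(A)$ on the equilibrium energy. The whole point is therefore to show that \emph{no} probability measure on $A$ does asymptotically better than normalized Hausdorff measure, which by \autoref{cor:decay} already achieves the value $d\,\sigma_d(A)/\cH^d(A)$ in the $\limsup$. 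Since the $p$-equilibrium measure $\mu_p$ satisfies $\int_A|x-y|^{-p}\,d\mu_p(y)=V_p(A)$ for $\mu_p$-almost every $x$, and the far-field part $p\int_1^\infty\mu_p(\bbB^n(x,r))\,r^{-p-1}\,dr$ of this potential is bounded, one has the identity
\[
(d-p)V_p(A)=(d-p)\,p\int_A\int_0^1\mu_p(\bbB^n(x,r))\,r^{-p-1}\,dr\,d\mu_p(x)+o(1),
\]
so the target reduces to controlling from below the averaged second-order behavior of $\mu_p$ near $x$, uniformly as $p\nearrow d$.

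The plan is to use strict self-similarity to convert this into a renewal-type recursion. Write $A=\bigcup_{i=1}^N A_i$ with $A_i=\varphi_i(A)$ and $\dist(A_i,A_j)\ge\delta>0$ for $i\ne j$, and split any probability measure on $A$ over the $A_i$. The cross-interaction energies $\iint_{A_i\times A_j}|x-y|^{-p}$ are at most $\delta^{-p}$ times a product of masses, hence bounded uniformly for $p$ near $d$ and annihilated by the factor $d-p$; the self-interaction energies transform under $|\varphi_i(x)-\varphi_i(y)|=L_i|x-y|$ into $L_i^{-p}$ times Riesz energies of measures back on $A$. Iterating this decomposition over the cylinders $A_w=\varphi_{w_1}\!\circ\cdots\circ\varphi_{w_k}(A)$ for words $w\in\{1,\dots,N\}^k$ and letting $k\to\infty$ expresses $(d-p)V_p(A)$, up to errors vanishing as $p\nearrow d$, through the distribution of the mass of $\mu_p$ among cylinders together with the single-scale cross energies. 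Because $\sum_i L_i^d=1$ while $\sum_i L_i^p>1$ for $p<d$, the resulting mass-flow relations are governed at the critical exponent by the same renewal equation that underlies Z\"ahle's theorem (property (iii)) that $\sigma_d(\cH^d|_A,\cdot)$ equals the constant $\sigma_d(A)$ almost everywhere; the goal is to identify the limit of the recursion with $d\,\sigma_d(A)/\cH^d(A)$.

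Carrying this out requires showing that the equilibrium measures are \emph{asymptotically self-similar}: that $\mu_p|_{A_i}$ agrees to leading order, as $p\nearrow d$, with $\mu_p(A_i)\,(\varphi_i)_*\mu_p$, and that $\mu_p$ converges weak-$*$ to $\cH^d|_A/\cH^d(A)$ with a uniform upper-regularity bound $\mu_p(\bbB^n(x,r))\le Cr^d$ valid for all $p$ sufficiently close to $d$. Granting such control, one takes a sequence $p_k\nearrow d$ realizing the $\liminf$, discards the interactions at scales larger than a fixed $\epsilon$ (a bounded, hence weak-$*$ continuous, kernel), and passes to the limit in the remaining layer-cake integral: the uniform upper regularity provides the domination that makes the renormalized limit legitimate and identifies it, via property (iii) and the convergence $\mu_p\rightharpoonup\cH^d|_A/\cH^d(A)$, with $d\int_A\sigma_d(\cH^d|_A,x)\,d\!\left(\cH^d|_A/\cH^d(A)\right)(x)=d\,\sigma_d(A)/\cH^d(A)$. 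An equivalent route first establishes the weak-$*$ convergence $\mu_p\rightharpoonup\cH^d|_A/\cH^d(A)$ by a Calef--Hardin-type argument adapted from the rectifiable to the self-similar setting, and then upgrades it to convergence of renormalized energies through the same truncation estimate.

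The principal obstacle is exactly this uniform control of $\mu_p$ near $p=d$. Without a bound $\mu_p(\bbB^n(x,r))\le Cr^d$ uniform in $p$ --- equivalently, a quantitative version of the asymptotic self-similarity of $\mu_p$ --- a competitor could move mass into a sub-cylinder in a way that is cheap at each finite $p$ yet invisible in the crude limit; this is the very mechanism that blocks interchanging $\lim_{p\nearrow d}$ with $\min_\mu$ and that leaves \autoref{th:cap_easydirc} one-sided. Securing such regularity, presumably from the variational characterization of $\mu_p$ (constancy of its potential on $A$ away from small exceptional sets) together with the Ahlfors $d$-regularity of $A$ from property (ii), is where the real difficulty lies. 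A secondary, softer point is a second-order uncertainty principle --- that $\cH^d|_A/\cH^d(A)$ minimizes $\nu\mapsto d\int_A\overline{\sigma}_d(\nu,x)\,d\nu(x)$ among probability measures on $A$; this is readily checked for competitors supported on cylinders, where self-similarity shows that concentration strictly raises the renormalized energy, and it should fall out once the recursion above is in place, but it merits a separate argument.
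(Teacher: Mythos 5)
This statement is posed in the paper as a \emph{conjecture}: the authors prove only the one-sided bound (\autoref{th:cap_easydirc} and \autoref{cor:decay}) and state explicitly that their attempts at the reverse inequality for the $\limsup$ have not been successful. Your text, by its own admission, is a program rather than a proof, and the steps it leaves open are precisely the ones that constitute the open problem. Concretely, the reverse direction requires a lower bound $\liminf_{p\nearrow d}(d-p)V_p(A)\ge d\,\sigma_d(A)/\cH^d(A)$ that holds for the actual minimizers $\mu_p$, and every route you sketch funnels through unproven quantitative control of $\mu_p$ uniformly in $p$: the upper-regularity bound $\mu_p(\bbB^n(x,r))\le Cr^d$ with $C$ independent of $p$, the ``asymptotic self-similarity'' $\mu_p|_{A_i}\approx\mu_p(A_i)\,(\varphi_i)_*\mu_p$, and the claim that normalized Hausdorff measure minimizes the renormalized second-order functional $\nu\mapsto d\int_A\overline{\sigma}_d(\nu,x)\,d\nu(x)$. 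None of these is established, and the last is not ``readily checked'': the factor $d-p\to 0$ annihilates all bounded contributions, so the limit of $(d-p)V_p$ is determined entirely by the diagonal (small-$r$) behavior of $\mu_p$, which is invisible to weak-$*$ convergence. In particular, Calef's result \cite{C10} that $\mu_p\rightharpoonup\cH^d|_A/\cH^d(A)$ --- which the paper cites as evidence for the conjecture --- does not give lower semicontinuity of the renormalized energy along $p\nearrow d$, because the kernel varies with $p$ and is rescaled to zero; this is exactly the mechanism you yourself identify as the ``principal obstacle.''

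So the gap is not a fixable technicality in an otherwise complete argument: the renewal-type recursion you describe reduces the conjecture to statements (uniform Ahlfors regularity of $\mu_p$, identification of the limiting recursion with $d\,\sigma_d(A)/\cH^d(A)$) that are at least as hard as the conjecture itself, and for which no proof is offered here or in the literature cited (\cite{C10}, \cite{CH09}). As a statement of a plausible strategy and of where the difficulty sits, your write-up is accurate and consonant with the paper's discussion; as a proof of \autoref{conj:capfractal}, it does not close the $\limsup$ direction, which remains open.
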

In support of the conjecture, note that as $p \nearrow d$, the weak-$*$ limit of $p$-equilibrium measure on the fractal set $A$ equals normalized Hausdorff measure, by Calef \cite[Theorem 1.3]{C10}. Perhaps surprisingly, that proof follows quite different lines from the corresponding work of Calef and Hardin \cite[Theorem 1.3]{CH09} for strongly rectifiable sets. The fractal arguments for convergence of the equilibrium measure do not seem to provide tools that might help prove the limit of capacity in \autoref{conj:capfractal}. Nonetheless, the research of those two authors has helped inspire the current paper. 

\subsection{Remarks}

\subsubsection*{Riesz capacity} Our definition \eqref{eq:capdef} of Riesz capacity follows Hayman and Kennedy \cite{HK76} in taking the $p$-th root of the energy, whereas other authors such as Landkof \cite{L72} do not. Another difference is that Landkof uses $n-p$ instead of $p$ as the Riesz exponent. The definition in \eqref{eq:capdef} seems the most natural choice for three reasons: it makes capacity a decreasing function of $p$, it recovers logarithmic capacity as $p \searrow 0$ (at least for nice sets), and it permits a natural extension to $p<0$. For these results see Clark and Laugesen \cite{CL24b}.  

\subsubsection*{Variational capacity} The variational capacity $\min_u \int_\Rn |\nabla u|^q \, dx$ of a set $E$, where $u \geq 1$ on $E$ and $u \to 0$ at infinity, has been studied by many authors. When $q=2$, it agrees with Newtonian capacity $\capntwo(K)$ up to a constant factor. For other values of $q$, the variational capacity does not seem to be directly connected with Riesz capacity.

\section{\bf Preliminaries on Lipschitz maps}
A Lipschitz mapping with constant $\lambda$ increases the $d$-dimensional Hausdorff measure by a factor of at most $\lambda^d$, as one sees immediately from the definitions. 
\begin{lemma}[Hausdorff measure under Lipschitz map]\label{lm1}
Let $1 \leq d \leq n$ and $\lambda>0$. If $f: \bbR^d \to \bbR^n$ is a $\lambda$-Lipschitz map then $\cH^{d}(f(A))\le \lambda^d \cH^d(A)$ for every $\cH^d$-measurable set $A\subset \bbR^d$.
\end{lemma}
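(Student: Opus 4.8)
The plan is to argue directly from the definition of Hausdorff measure by pushing an economical cover of $A$ forward through $f$. Recall that for $\delta>0$ one sets
\[
\cH^d_\delta(B)=\inf\Bigl\{\textstyle\sum_j c_d\,(\diam U_j)^d : B\subset\bigcup_j U_j,\ \diam U_j\le\delta\Bigr\},
\]
where $c_d=|\bbB^d|/2^d$ is the normalization constant making $\cH^d$ agree with Lebesgue measure on $\Rd$, and then $\cH^d(B)=\sup_{\delta>0}\cH^d_\delta(B)=\lim_{\delta\searrow0}\cH^d_\delta(B)$. Measurability of $A$ will play no role; the inequality holds for an arbitrary $A\subset\Rd$, and the restriction that $d$ be an integer in $[1,n]$ is likewise not needed.

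Fix $\delta>0$ and let $\{U_j\}$ be an admissible cover of $A$ with $\diam U_j\le\delta$. Since $\{U_j\cap A\}$ still covers $A$ and has no larger diameters, the family $\{f(U_j\cap A)\}$ covers $f(A)$, and the $\lambda$-Lipschitz bound gives $\diam f(U_j\cap A)\le\lambda\,\diam(U_j\cap A)\le\lambda\delta$. Thus $\{f(U_j\cap A)\}$ is an admissible cover of $f(A)$ at scale $\lambda\delta$, and
\[
\cH^d_{\lambda\delta}\bigl(f(A)\bigr)\le\sum_j c_d\,(\diam f(U_j\cap A))^d\le\lambda^d\sum_j c_d\,(\diam U_j)^d.
\]
Taking the infimum over all such covers $\{U_j\}$ yields $\cH^d_{\lambda\delta}(f(A))\le\lambda^d\,\cH^d_\delta(A)\le\lambda^d\,\cH^d(A)$. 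Letting $\delta\searrow0$, so that $\lambda\delta\searrow0$ as well, and invoking the definition of $\cH^d$ as the supremum over scales gives $\cH^d(f(A))\le\lambda^d\,\cH^d(A)$, as claimed.

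There is essentially no obstacle here; the only points that deserve a word of care are that the \emph{same} normalization constant $c_d$ appears on both sides and so cancels cleanly, and that one must push the cover forward \emph{before} taking the infimum, so that it is the scale $\lambda\delta$ — not $\delta$ — that appears on the left, whence the final passage $\delta\searrow0$ is legitimate. If one preferred, the inequality also follows from the general principle that $\cH^d$ does not increase under maps that do not increase diameters, combined with the dilation scaling $\cH^d(\lambda B)=\lambda^d\cH^d(B)$; but the one-line covering computation above is the most self-contained route and is the one I would include.
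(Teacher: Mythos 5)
Your covering argument is correct and is exactly the argument the paper has in mind: the paper offers no written proof, remarking only that the bound follows ``immediately from the definitions,'' i.e., by pushing a $\delta$-cover forward and noting diameters grow by at most a factor $\lambda$. Nothing further is needed.
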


The next lemma estimates a Riesz potential by ``straightening out'' the set with a bi-Lipschitz map.
\begin{lemma}[Local potential estimate] \label{lm:bound}
	Let $0\le p<d \le n$, where $d$ and $n$ are integers. If $\varphi:K\to \bbR^n$ is a bi-Lipschitz map with bi-Lipschitz constant $\lambda\ge 1$, where $K\subset \bbR^d$ is compact and $\varphi(K)=E$, then
	\begin{align*}
		\int_{E \,\cap \,\bbB^n(y,r)}\frac{1}{|x-y|^p} \, d\cH^d(x)  \le \frac{\lambda^{2d} \, r^{d-p}\,|\bbS^{d-1}|}{d-p}, \qquad y\in E,
	\end{align*}
	for all $r>0$. In particular, when $p=0$ one has 
	\begin{align*}
		\cH^d\!\left(E\cap \bbB^n(y,r)\right) \le \lambda^{2d} \, r^d \frac{|\bbS^{d-1}|}{d} , \qquad y \in E .
	\end{align*}
\end{lemma}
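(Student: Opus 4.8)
The plan is to ``straighten out'' $E$ with $\varphi^{-1}$ and reduce to the elementary computation of $\int|u|^{-p}\,du$ over a Euclidean ball in $\bbR^d$, which converges precisely because $p<d$. Fix $y\in E$ and write $y=\varphi(w)$ with $w\in K$. Let $\nu$ be the pushforward of $\cH^d|_E$ under $\varphi^{-1}$, a Borel measure on $K$; since $\varphi$ is a homeomorphism of the compact set $K$ onto $E$ there are no measurability difficulties, and for every Borel $A\subset K$ one has $\nu(A)=\cH^d(\varphi(A))\le\lambda^d\cH^d(A)=\lambda^d\cL^d(A)$ by \autoref{lm1} (the map $\varphi$ is $\lambda$-Lipschitz) together with the normalization of $\cH^d$ on $\bbR^d$. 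By monotone approximation with simple functions, it follows that $\int_K g\,d\nu\le\lambda^d\int_K g\,du$ for every nonnegative Borel $g$ on $K$.

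Next I transfer the integral to $K$ via $x=\varphi(u)$ and exploit the lower bi-Lipschitz bound $|\varphi(u)-\varphi(w)|\ge\lambda^{-1}|u-w|$ in two ways: it forces $x\in\bbB^n(y,r)$ to imply $|u-w|\le\lambda r$, so the domain lies inside $\bbB^d(w,\lambda r)\cap K$; and it gives $|x-y|^{-p}\le\lambda^p|u-w|^{-p}$. Therefore
\begin{align*}
\int_{E\,\cap\,\bbB^n(y,r)}\frac{1}{|x-y|^p}\,d\cH^d(x)
&=\int_{\{u\in K:\varphi(u)\in\bbB^n(y,r)\}}\frac{1}{|\varphi(u)-\varphi(w)|^p}\,d\nu(u)\\
&\le\lambda^p\int_{\bbB^d(w,\lambda r)\cap K}\frac{1}{|u-w|^p}\,d\nu(u)
\le\lambda^{p+d}\int_{\bbB^d(w,\lambda r)}\frac{1}{|u-w|^p}\,du.
\end{align*}

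Finally, evaluating the last integral in polar coordinates centered at $w$ and using $d-p>0$ gives $\int_{\bbB^d(w,\lambda r)}|u-w|^{-p}\,du=|\bbS^{d-1}|\int_0^{\lambda r}s^{d-p-1}\,ds=|\bbS^{d-1}|(\lambda r)^{d-p}/(d-p)$. Multiplying by $\lambda^{p+d}$ produces $\lambda^{2d}r^{d-p}|\bbS^{d-1}|/(d-p)$, which is the asserted bound, and the displayed $p=0$ statement is just that value with the exponent set to zero. I do not expect a serious obstacle: the only points meriting care are that the pushforward $\nu$ is genuinely dominated by $\lambda^d\cL^d$ (so the change-of-variables inequality is legitimate even for the singular integrand $|u-w|^{-p}$), and that local integrability of $|u-w|^{-p}$ on $\bbR^d$ is exactly the hypothesis $p<d$.
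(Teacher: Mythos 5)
Your proof is correct and follows essentially the same route as the paper: straighten the set with $\varphi^{-1}$, use the lower Lipschitz bound to get the $\lambda^p$ integrand estimate and the inclusion of the transferred domain in $\bbB^d(w,\lambda r)$, use the upper Lipschitz bound to pay a factor $\lambda^d$ in the change of measure, and finish with polar coordinates, giving $\lambda^{p+d}\cdot(\lambda r)^{d-p}=\lambda^{2d}r^{d-p}$. Your only addition is to make the change of variables explicit via the pushforward measure dominated by $\lambda^d\cL^d$, which the paper states more informally as ``$\varphi$ increases $d$-dimensional volumes by at most $\lambda^d$.''
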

\begin{proof}
Fix $r>0$ and $y \in E$. By a translation of $K$, we may assume $\varphi(0)=y$. Given $x \in E$, write $x' = \varphi^{-1}(x)$ for its preimage. First we estimate the integrand, using that
\[
	|x-y|^p \ge \frac{1}{\lambda^{p}}|\varphi^{-1}(x) - \varphi^{-1}(y)|
\]
by the lower Lipschitz bound. Next, the upper Lipschitz bound says that $\varphi$ stretches each direction by at most $\lambda$ and so it increases $d$-dimensional volumes by at most $\lambda^d$. Hence by the integrand estimate and a change of variable,
\begin{align*}
	\int_{E \,\cap \,\bbB^n(y,r)}\frac{1}{|x-y|^p} \, d\cH^d(x)
	& \le\int_{E \,\cap \,\bbB^n(y,r)}\frac{\lambda^{p}}{|\varphi^{-1}(x)-\varphi^{-1}(y)|^p} \, d\cH^d(x)\\ 
	& \le \int_{\varphi^{-1}(E\,\cap\,\bbB^{n}(y, r))}\frac{\lambda^{p+d}}{|x'-0|^p} \, dx'\\ 
	&\le \int_{\bbB^{d}(0,\lambda r)}\frac{\lambda^{p+d}}{|x'|^p} \, dx'\\
	&= \frac{\lambda^{2d} \, r^{d-p}\,|\bbS^{d-1}|}{d-p},
\end{align*}
where the third inequality uses that $\varphi^{-1}(\bbB^{n}(y, r)) \subset \bbB^{d}(0,\lambda r)$, by the lower Lipschitz condition and the fact that $\varphi^{-1}(y)=0$.
\end{proof}

\section{\bf Subadditivity of reciprocal energy}\label{sec:subadditivity}

Subadditivity of the reciprocal of Riesz energy will be needed later in the paper. The standard proof relies on potential theoretic techniques \cite[p.{\,}141]{L72}, \cite[Theorem 5.28]{HK76}, perhaps because the authors aim at the better result known as strong subadditivity. Following is a short proof relying only on measure theory and Cauchy--Schwarz. 

Let $(X,\mathfrak{M})$ be a measurable space and suppose $G$ is a nonnegative, product measurable function on $X \times X$. Define the energy of a measurable set $E \subset X$ to be
\[
W(E) = \inf_\mu \int_E \int_E G(x,y) \, d\mu(x) d\mu(y)
\]  
where the infimum is taken over all probability measures $\mu$ on $E$, that is,  measures on $\mathfrak{M}$ with $\mu(E)=1$. We do not require that the infimum in the definition be attained. If $E$ does not support any probability measures, in particular if it is empty, then the energy equals $\infty$ by convention. 
\begin{proposition}[Subadditivity of reciprocal energy] \label{pr:subadditivity}
If $E_1, E_2, E_3, \ldots$ are measurable subsets of $X$ then 
\[
\frac{1}{W(\cup_i E_i)} \leq \sum_i \frac{1}{W(E_i)} .
\]
\end{proposition}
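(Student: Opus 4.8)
The plan is to reduce to the case where the $E_i$ are pairwise disjoint, then exhibit a single near-optimal probability measure $\mu$ on the union $E=\bigcup_i E_i$, bound its energy from below by the self-interaction contributions $\int_{E_i}\int_{E_i}G\,d\mu\,d\mu$ alone (discarding the cross terms, which is legitimate because $G\ge 0$), and finish with Cauchy--Schwarz. For the reduction, replace $E_i$ by $F_i=E_i\setminus\bigcup_{j<i}E_j$: these are measurable and pairwise disjoint with $\bigcup_i F_i=E$, and since $W$ is monotone decreasing under set inclusion (a probability measure supported on a subset competes for the larger set at the same energy) one has $W(F_i)\ge W(E_i)$, hence $\sum_i 1/W(F_i)\le\sum_i 1/W(E_i)$. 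So it suffices to prove $1/W(E)\le\sum_i 1/W(F_i)$; renaming $F_i$ as $E_i$, I may assume the $E_i$ are pairwise disjoint with union $E$. I may also assume $W(E)<\infty$, the inequality being trivial otherwise.

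Next I would fix $\epsilon>0$, choose a probability measure $\mu$ on $E$ with $\int_E\int_E G\,d\mu\,d\mu\le W(E)+\epsilon$, and set $m_i=\mu(E_i)$, so $m_i\ge 0$ and $\sum_i m_i=1$ by disjointness. Since $G$ is nonnegative and product measurable, Tonelli's theorem justifies decomposing $E\times E=\bigsqcup_{i,j}E_i\times E_j$ and throwing away the off-diagonal blocks:
\[
W(E)+\epsilon\ \ge\ \sum_{i,j}\int_{E_i}\int_{E_j}G\,d\mu\,d\mu\ \ge\ \sum_i\int_{E_i}\int_{E_i}G\,d\mu\,d\mu .
\]
For each $i$ with $m_i>0$ the normalized restriction $\mu|_{E_i}/m_i$ is a probability measure on $E_i$, so $\int_{E_i}\int_{E_i}G\,d\mu\,d\mu\ge m_i^2\,W(E_i)$, which is trivially true also when $m_i=0$. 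Writing $W_i=W(E_i)$ and combining, $\sum_i m_i^2 W_i\le W(E)+\epsilon<\infty$; in particular $m_i=0$ whenever $W_i=\infty$, so all sums may be restricted to indices with $W_i<\infty$ (there is at least one such, since otherwise $\sum_i m_i=0\ne 1$).

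Finally I would split $m_i=(m_iW_i^{1/2})(W_i^{-1/2})$ and apply Cauchy--Schwarz:
\[
1=\Big(\sum_i m_i\Big)^{2}\le\Big(\sum_i m_i^2 W_i\Big)\Big(\sum_i\frac1{W_i}\Big)\le\bigl(W(E)+\epsilon\bigr)\sum_i\frac1{W_i}.
\]
Letting $\epsilon\searrow 0$ gives $1/W(E)\le\sum_i 1/W(E_i)$ after undoing the renaming. The step requiring the most care is the opening reduction to disjoint sets — it is precisely what makes the block decomposition of the double integral legitimate — together with the bookkeeping when some of the energies are infinite; by contrast the two substantive moves, discarding the cross terms (the sole place nonnegativity of $G$ enters) and the Cauchy--Schwarz estimate, are each only a line, so I do not anticipate a genuine obstacle beyond setting up the framework correctly.
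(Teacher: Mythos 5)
Your argument is essentially the paper's proof: disjointify the sets, take a (near-)minimizing probability measure on the union, discard the cross terms using nonnegativity of $G$, bound each diagonal block below by $\mu(E_i)^2 W(E_i)$ via the normalized restriction, and finish with Cauchy--Schwarz. The only real difference is bookkeeping: the paper first replaces $G$ by $G+\epsilon$ so that every energy is bounded below by $\epsilon>0$, which disposes of the degenerate possibility $W(E_i)=0$ that your split $m_i=(m_iW_i^{1/2})(W_i^{-1/2})$ leaves implicit (trivial anyway, since then the right-hand side is infinite), whereas you handle the infinite-energy indices directly instead of via the kernel perturbation.
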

Subadditivity also holds if there are only finitely many sets $E_1,\ldots, E_m$, simply by padding the sequence with empty sets, for which the reciprocal energy equals zero.
\begin{proof}
It suffices to prove the proposition with kernel $G+\epsilon$, because that choice increases the energy of each set by $\epsilon$, after which  one may simply take $\epsilon \to 0$ in the conclusion of the proposition. Thus we may suppose from now on that the kernel $G$ and energy $W$ are bounded below by a positive constant. 

Write $E=\cup_i E_i$. If $W(E)=\infty$ then there is nothing to prove, and so we may suppose $W(E)<\infty$. Let $\mu$ be a probability measure on $E$ that has finite energy. To avoid double-counting in the proof below, we disjointify the sets: let $E_1^*=E_1, E_2^*=E_2 \setminus E_1, E_3^*=E_3 \setminus (E_1 \cup E_2)$, and so on. 

The index set $I(\mu) = \{i: \mu(E_i^*)>0\}$ is nonempty and
\[
\sum_{i\in I(\mu)} \mu(E_i^*)=\sum_{i=1}^\infty \mu(E_i^*) = \mu\left(\cup_{i=1}^\infty E_i^* \right)=\mu(E) = 1.
\]
By decomposing $E$ into the $E_i^*$ and discarding all cross terms, we estimate the energy of $\mu$ from below by
\begin{align*}
	\int_E \int_E G(x,y) \, d\mu d\mu
	& \ge \sum_{i\in I(\mu)} \int_{E_i^*} \int_{E_i^*} G(x,y) \, d\mu d\mu \\
	& \ge \sum_{i\in I(\mu)} \mu(E_i^*)^2 W(E_i),
\end{align*}
where we used that the restricted and normalized measure $\mu(\cdot \cap E_i^*)/\mu(E_i^*)$ is a probability measure on $E_i^*$ and hence also on $E_i$, and hence can serve as a trial measure for the energy $W(E_i)$. Notice that $W(E_i)$ on the right side is finite (and positive) since the left side of the inequality is finite by assumption on $\mu$. Next, by Cauchy--Schwarz, 
\begin{align*}
	\sum_{i\in I(\mu)} \mu(E_i^*)^2 W(E_i) \ge \frac{\left(\sum_{i\in I(\mu)} \mu(E_i^*)\right)^{\! 2}}{\sum_{i\in I(\mu)} W(E_i)^{-1}}
	\ge \frac{1}{\sum_{i=1}^\infty W(E_i)^{-1}}.
\end{align*}

Infimizing over the probability measures $\mu$, we deduce that 
\[
	W(E) = \inf_\mu \int_E \int_E G(x,y) \, d\mu d\mu \ge \frac{1}{\sum_{i=1}^\infty W(E_i)^{-1}} , 
\]
which proves the proposition. 
\end{proof}

\section{\bf Upper limit of the energy for \autoref{th:main}}\label{sec:upper}

The conclusion of \autoref{th:main} can be rewritten in terms of energy as 
\[
\lim_{p\nearrow d} (d-p) V_p(E) = \frac{|\Sph^{d-1}|}{\cH^d(E)} .
\]
The next proposition proves the upper direction of this equality. The lower direction is established in \autoref{sec:lower}. As usual, $d$ and $n$ are positive integers with $1 \leq d \leq n$. 
\begin{proposition} \label{pr:upper}
If $E\subset \bbR^n$ is compact and strongly $d$-rectifiable then
	\begin{equation} \label{easy_direction}
		\limsup_{p\nearrow d} (d-p) V_p(E) \le \frac{|\Sph^{d-1}|}{\cH^d(E)}.
	\end{equation}
\end{proposition}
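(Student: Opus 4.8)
The plan is to bound the energy from above by testing it against a suitable multiple of Hausdorff measure, restricted so as to avoid the places where the bi-Lipschitz pieces of $E$ overlap. We may assume $\cH^d(E)>0$, since otherwise the right side of \eqref{easy_direction} is infinite and there is nothing to prove. Fix $\epsilon>0$ and choose a strongly $d$-rectifiable decomposition $E=E_1\cup\dots\cup E_m\cup F$ with bi-Lipschitz maps $\varphi_i:K_i\to E_i$ of constant at most $1+\epsilon$, with $\cH^d(E_i\cap E_j)=0$ for $i\ne j$ and $\dim F<d$, so that $\cH^d(F)=0$. Put $B=\bigcup_{i\ne j}(E_i\cap E_j)$, a compact set with $\cH^d(B)=0$, and for small $\delta>0$ let $E^\delta=E\setminus U_\delta$ where $U_\delta$ is the open $\delta$-neighborhood of $B$. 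Then $E^\delta$ is compact, the traces $E_i^\delta:=E_i\cap E^\delta$ are pairwise disjoint (since $E_i\cap E_j\subset B\subset U_\delta$), and because $\bigcap_{\delta>0}U_\delta=B$ while $\cH^d(E)<\infty$, continuity of measure from above gives $\cH^d(E\cap U_\delta)\searrow\cH^d(B)=0$ and hence $\cH^d(E^\delta)\nearrow\cH^d(E)$ as $\delta\searrow0$; in particular $\cH^d(E^\delta)>0$ for $\delta$ small. Take as trial measure $\mu=\cH^d|_{E^\delta}/\cH^d(E^\delta)$. Since $\cH^d(F)=0$, this $\mu$ is carried by the disjoint union $\bigcup_i E_i^\delta$, so its energy equals $\cH^d(E^\delta)^{-2}\sum_{i,j}\int_{E_i^\delta}\int_{E_j^\delta}|x-y|^{-p}\,d\cH^d\,d\cH^d$, and $V_p(E)$ is at most this quantity.

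\textbf{Estimating the terms.} For a diagonal term $i=j$, fix any $r>\diam E$; since $y\in E_i^\delta\subset E_i=\varphi_i(K_i)$ we have $E_i\subset\bbB^n(y,r)$, so \autoref{lm:bound} (with bi-Lipschitz constant $\le 1+\epsilon$) yields $\int_{E_i^\delta}|x-y|^{-p}\,d\cH^d(x)\le(1+\epsilon)^{2d}r^{d-p}|\bbS^{d-1}|/(d-p)$ for every such $y$. Integrating in $y$ over $E_i^\delta$, multiplying by $(d-p)/\cH^d(E^\delta)^2$, and letting $p\nearrow d$ (so $r^{d-p}\to1$) bounds this contribution in the limit by $(1+\epsilon)^{2d}|\bbS^{d-1}|\,\cH^d(E_i^\delta)/\cH^d(E^\delta)^2$. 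For an off-diagonal term $i\ne j$, a compactness argument shows $c_{ij}:=\dist(E_i^\delta,E_j^\delta)>0$: otherwise sequences in these two compact sets would converge to a common limit lying in $E_i\cap E_j\subset B$, contradicting that $E^\delta$ lies at distance $\ge\delta$ from $B$. Hence $\int_{E_i^\delta}\int_{E_j^\delta}|x-y|^{-p}\,d\cH^d\,d\cH^d\le c_{ij}^{-p}\cH^d(E_i)\cH^d(E_j)$ stays bounded as $p\nearrow d$, and the factor $d-p$ sends it to $0$. Summing the finitely many resulting limits and using $\sum_i\cH^d(E_i^\delta)=\cH^d(E^\delta)$, I obtain $\limsup_{p\nearrow d}(d-p)V_p(E)\le(1+\epsilon)^{2d}|\bbS^{d-1}|/\cH^d(E^\delta)$.

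\textbf{Conclusion and main obstacle.} Since the left side is independent of $\delta$ and $\epsilon$, I would let $\delta\searrow0$, so that $\cH^d(E^\delta)\to\cH^d(E)$, and then $\epsilon\searrow0$, so that $(1+\epsilon)^{2d}\to1$, which gives \eqref{easy_direction}. I expect the off-diagonal interactions to be the main obstacle: although the overlap set $E_i\cap E_j$ is $\cH^d$-null, points of one piece can lie arbitrarily close to another piece, so the interaction integrals need not be bounded as $p\nearrow d$ until the neighborhood $U_\delta$ of $B$ is excised. The delicate point is to verify that excising $U_\delta$ simultaneously disjoins the pieces, keeps them a positive distance apart, and costs only a vanishing amount of $\cH^d$-measure, so that nothing is lost when $\delta\searrow0$.
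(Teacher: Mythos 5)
Your proposal is correct and follows essentially the same route as the paper's first proof: excise a small neighborhood of the overlap set $\cup_{i\neq j}(E_i\cap E_j)$, test the energy with normalized Hausdorff measure on what remains, control the self-interactions with \autoref{lm:bound} and the cross-interactions via the positive separation of the excised pieces, then let the excised neighborhood and $\epsilon$ tend to zero. The only (cosmetic) differences are that you organize the energy by piece-pairs and apply \autoref{lm:bound} with a global radius $r>\diam E$, whereas the paper splits each inner integral at the separation distance $\delta$ and chooses the excised set to have measure less than $\epsilon$ rather than taking a separate $\delta\searrow 0$ limit.
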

\begin{proof}
If $\cH^d(E)=0$ then there is nothing to prove, and so we suppose $\cH^d(E)>0$. Take $0<\epsilon<\cH^d(E)$. By definition, the strongly rectifiable set $E$ decomposes as $E = \left( \cup_{i=1}^m E_i \right) \cup F$, where the set of intersection points $A = \cup_{1\le i< j \le m} (E_i\cap E_j)$ has measure zero, i.e.~$\cH^d(A) = 0$. 
Let $A(\alpha) = \{ x\in E: \dist(x,A)<\alpha\}$ be the subset of $E$ within distance $\alpha$ of $A$. Notice that $\cap_{\alpha>0}A(\alpha) = A$ since $A$ is closed. Thus by continuity of the measure from above, there exists $\alpha_0>0$ such that the set $B = A(\alpha_0)$ has $\cH^d(B)<\epsilon$. Hence $\cH^d(E \setminus B)>0$.

Let $\tilde{E}_i = E_i \backslash B$, that is, $E_i$ with the ``bad'' part $B$ removed. Notice that the sets $\tilde{E}_i$ are disjoint and compact (due to the strict inequality in the definition of $A(\alpha_0)$) and so are separated by some positive distance $\delta$, meaning $\dist(\tilde{E}_i, \tilde{E}_j)\ge \delta>0$ when $i\neq j$. 

We give two proofs of inequality \eqref{easy_direction}. The first works directly with the Riesz kernel. The second employs an alternative formula for the energy.

\subsection*{First proof} Take $\mu$ to be normalized Hausdorff measure on $E \setminus B$. That set consists of the $\tilde{E}_i$ together with $F \setminus B$, but $\cH^d(F \setminus B)=0$ and so we ignore that set in the following proof. Using $\mu$ as a trial measure in the definition of the energy, we find
\begin{align}
        V_p(E) &\le \, \frac{1}{\cH^d(E\backslash B)^2} \int_{E\backslash B} \int_{E\backslash B} \frac{1}{|x-y|^p} \, d\cH^d(x) d\cH^d(y) \notag\\
        & \le \frac{1}{\cH^d(E \backslash B)^2} \int_{E\backslash B} \int_{(E\backslash B) \setminus \bbB^n(y,\delta)} \frac{1}{|x-y|^p} \, d\cH^d(x)  d\cH^d(y) \label{int:1}\\
        & + \,\frac{1}{\cH^d(E \backslash B)^2} \sum_{i=1}^m \int_{\tilde{E}_i} \int_{(E \setminus B) \cap \, \bbB^n(y,\delta)} \frac{1}{|x-y|^p} \, d\cH^d(x)  d\cH^d(y). \label{int:2} 
\end{align}
Expression \eqref{int:1} is bounded straightforwardly by $\delta^{-p}$, since $x \notin \bbB^n(y,\delta)$ forces $|x-y| \geq \delta$. For the inner integral in \eqref{int:2}, we have $y\in \tilde{E}_i$ and so the ball $\bbB^n(y,\delta)$ does not intersect $\tilde{E}_j$ for $j \neq i$. Hence $(E \setminus B) \cap \bbB^n(y,\delta) = \tilde{E}_i \cap \bbB^n(y,\delta)$. Thus the inner integral can be estimated using \autoref{lm:bound} by 
\begin{align*}
	\int_{\tilde{E}_i \,\cap \,\bbB^n(y,\delta)}\frac{1}{|x-y|^p} \, d\cH^d(x) 
	\le \frac{(1+\epsilon)^{2d} \, \delta^{d-p}\,|\bbS^{d-1}|}{d-p} .
\end{align*}
Hence line \eqref{int:2} is bounded by
\begin{align*}
	\frac{1}{\cH^d(E\backslash B)^2} \sum_{i=1}^m \cH^d(\tilde{E}_i) \frac{(1+\epsilon)^{2d} \, \delta^{d-p}\,|\bbS^{d-1}|}{d-p}
	\le \frac{(1+\epsilon)^{2d}\,\delta^{d-p}}{d-p}\frac{|\bbS^{d-1}|}{\cH^d(E\backslash B)}.
\end{align*}

Combining the estimates on \eqref{int:1} and \eqref{int:2} and multiplying by $d-p$, we find
\begin{align*}
	(d-p)V_p(E) \le \frac{(1+\epsilon)^{2d}\,\delta^{d-p}|\bbS^{d-1}|}{\cH^d(E\backslash B)}
	+ (d-p) \delta^{-p} .
\end{align*}
Letting $p\nearrow d$ gives 
\begin{align*}
	\limsup_{p\nearrow d}(d-p) V_p(E)\le \frac{(1+\epsilon)^{2d}|\bbS^{d-1}|}{\cH^d(E\backslash B)}.
\end{align*}
Finally, recalling that $\cH^d(B) < \epsilon$ and letting $\epsilon\to 0$, we conclude 
\begin{align*}
	\limsup_{p \nearrow d}(d-p)V_p(E) \le \frac{|\bbS^{d-1}|}{\cH^d(E)}, 
\end{align*}
which is the desired estimate \eqref{easy_direction}. 

\subsection*{Second proof}
As observed by G\"{o}tz \cite[formula (3)]{G03}, the Riesz kernel can be expressed for $p>0$ as 
\[
|x-y|^{-p} = p \int_{|x-y|}^\infty r^{-p-1} \, dr = p \int_0^\infty 1_{\B^n(x,r)}(y) r^{-p-1} \, dr
\]
and so the energy becomes
\begin{equation} \label{eq:Gotz}
V_p(E) = p\min_\mu \int_0^\infty \int_E \mu(\B^n(x,r)) \, d\mu(x) \, r^{-p-1} \, dr 
\end{equation}
where the minimum is taken over probability measures on the compact set $E$. 

Choosing $\mu$ once again to be normalized Hausdorff measure on $E\backslash B$, and integrating with respect to $r$ over the intervals $(0,\delta)$ and $(\delta,\infty)$, we deduce  
\begin{align*}
	V_p(E) & \leq \frac{p}{\cH^{d}(E\backslash B)^2}
\int_0^{\delta} \int_{E\backslash B} \cH^{d}(\B^n(x,r)\cap (E\backslash B)) \, d\cH^{d}(x) \, r^{-p-1} \, dr \\
& \hspace{1.7cm} + p\int_{\delta}^\infty \int_{E} 1 \,d\mu(x)r^{-p-1} \, dr .
\end{align*}
The second term equals $\delta^{-p}$, since $\mu(E)=1$.

In the first term, when $x\in E\backslash B = \cup_{i=1}^m \tilde{E}_i$ and $r<\delta$, we know $x$ belongs to precisely one of the $\tilde{E}_i$ and $\bbB^n(x,r)$ does not intersect $\tilde{E}_j$ when $j \neq i$. By applying \autoref{lm:bound} with $p=0$ to $x \in \tilde{E}_i$, we find that 
 \begin{align*}
 	\cH^d\left(\bbB^{n}(x, r)\cap (E\backslash B)\right)
 	=  \cH^{d}(\bbB^n (x, r)\cap \tilde{E}_i)
 	\le (1+\epsilon)^{2d} r^d \frac{|\Sph^{d-1}|}{d}.
 \end{align*}
This estimate is the same for each $i$, and so we conclude
 \begin{align*}
 	V_p(E) & \le (1+\epsilon)^{2d} |\Sph^{d-1}| \frac{p/d}{\cH^d (E\backslash B)^2} \int_0^\delta \cH^d(E\backslash B) \, r^{d-p-1}  \, dr + \delta^{-p}\\
 	& = (1+\epsilon)^{2d} |\Sph^{d-1}| \frac{p/d}{\cH^d (E\backslash B)} 
 	\frac{\delta^{d-p}}{d-p} +\delta^{-p}.
 \end{align*}

Multiply both sides by $d-p$ and let $p\nearrow d$, getting that
 \begin{align*}
 	\limsup_{p\nearrow d} (d-p) V_p(E)\le (1+\epsilon)^{2d} \frac{|\bbS^{d-1}|}{\cH^d (E\backslash B)},
 \end{align*}
Let $\epsilon \to 0$ to obtain as wanted for \eqref{easy_direction} that 
\begin{equation*}
		\limsup_{p\nearrow d} (d-p) V_p(E) \le \frac{|\Sph^{d-1}|}{\cH^d(E)}.
\end{equation*}
\end{proof}

\section{\bf Lower limit of the energy for \autoref{th:main}}\label{sec:lower}

To complete the proof of \autoref{th:main}, we establish a lower bound on the energy. 
\begin{proposition} \label{pr:lower}
Let $1 \leq d \leq n$. If $E\subset \bbR^n$ is compact and strongly $d$-rectifiable then
	\begin{equation} \label{hard_direction}
		\liminf_{p\nearrow d} (d-p) V_p(E) \ge \frac{|\Sph^{d-1}|}{\cH^d(E)}.
	\end{equation}
\end{proposition}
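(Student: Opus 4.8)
The plan is to reduce the lower bound for $E$ to the already-established full-dimensional case applied to the flat pieces $K_i\subset\bbR^d$, and then to stitch the pieces back together using \autoref{pr:subadditivity}. Fix $\epsilon>0$ and take the corresponding decomposition $E=\left(\cup_{i=1}^m E_i\right)\cup F$ coming from the definition of strong $d$-rectifiability, with $E_i=\varphi_i(K_i)$, each $\varphi_i$ bi-Lipschitz with constant $<1+\epsilon$, $\cH^d(E_i\cap E_j)=0$ for $i\neq j$, and $\dim F<d$. First I would dispose of $F$: whenever $\dim F<p<d$ one has $\cH^p(F)=0$, hence $\capp(F)=0$, that is $V_p(F)=\infty$, by \cite[Theorem 4.3.1]{BHS19}. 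Thus for $p$ sufficiently close to $d$ the remainder term below vanishes.

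The key ingredient is a lower bound on the energy of a single piece. Pushforward along the homeomorphism $\varphi_i\colon K_i\to E_i$ identifies probability measures on $K_i$ with those on $E_i$, and the upper Lipschitz bound $|\varphi_i(u)-\varphi_i(v)|\le(1+\epsilon)|u-v|$ gives $|\varphi_i(u)-\varphi_i(v)|^{-p}\ge(1+\epsilon)^{-p}|u-v|^{-p}$; integrating and infimizing yields $V_p(E_i)\ge(1+\epsilon)^{-p}V_p(K_i)$. Since $K_i$ is a compact subset of $\bbR^d$, the full-dimensional case of \autoref{th:main}, proved by Clark and Laugesen \cite[Corollary 1.2]{CL24b}, applies to $K_i$ and gives $\lim_{p\nearrow d}(d-p)V_p(K_i)=|\Sph^{d-1}|/\cH^d(K_i)$. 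Because $\varphi_i^{-1}$ is $(1+\epsilon)$-Lipschitz we have $\cH^d(K_i)\le(1+\epsilon)^d\cH^d(E_i)$ (cf. \autoref{lm1}), so
\[
\liminf_{p\nearrow d}(d-p)V_p(E_i)\ \ge\ (1+\epsilon)^{-d}\,\frac{|\Sph^{d-1}|}{\cH^d(K_i)}\ \ge\ \frac{|\Sph^{d-1}|}{(1+\epsilon)^{2d}\,\cH^d(E_i)},
\]
equivalently $\limsup_{p\nearrow d}\bigl((d-p)V_p(E_i)\bigr)^{-1}\le(1+\epsilon)^{2d}\,\cH^d(E_i)/|\Sph^{d-1}|$, which is read as $0$ when $\cH^d(E_i)=0$ since then $\cH^d(K_i)=0$ and the limit is $+\infty$.

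Then I would recombine. Applying \autoref{pr:subadditivity} to the cover $E=E_1\cup\dots\cup E_m\cup F$, and using $V_p(F)=\infty$ for $p$ near $d$,
\[
\frac{1}{(d-p)V_p(E)}\ \le\ \sum_{i=1}^m\frac{1}{(d-p)V_p(E_i)}.
\]
Taking the limit superior as $p\nearrow d$ through $p\in(\dim F,d)$, bounding the $\limsup$ of the finite sum by the sum of the $\limsup$s, and inserting the single-piece bounds,
\[
\limsup_{p\nearrow d}\frac{1}{(d-p)V_p(E)}\ \le\ \frac{(1+\epsilon)^{2d}}{|\Sph^{d-1}|}\sum_{i=1}^m\cH^d(E_i)\ =\ \frac{(1+\epsilon)^{2d}}{|\Sph^{d-1}|}\,\cH^d(E),
\]
where the last equality uses $\cH^d(E_i\cap E_j)=0$ and $\cH^d(F)=0$ (the latter from $\dim F<d$). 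Inverting and letting $\epsilon\to0$ gives $\liminf_{p\nearrow d}(d-p)V_p(E)\ge|\Sph^{d-1}|/\cH^d(E)$, which is \eqref{hard_direction}; the case $\cH^d(E)=0$ is included since then the right side is $+\infty$ and every piece contributes $0$ to the sum.

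The hard part — and the conceptual point of the argument — is that passing to each piece $E_i$ and throwing away all the cross-interaction energies between distinct pieces looks hopelessly wasteful; it is rescued exactly by subadditivity of the reciprocal energy (\autoref{pr:subadditivity}), which forces the self-interaction terms to dominate in the limit. The rest is bookkeeping: tracking the $(1+\epsilon)$-factors so they collapse as $\epsilon\to0$, checking that the low-dimensional remainder $F$ drops out for $p$ near $d$, and handling degenerate pieces with $\cH^d(E_i)=0$.
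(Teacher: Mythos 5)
Your proposal is correct and follows essentially the same route as the paper: bound $V_p(E_i)\ge(1+\epsilon)^{-p}V_p(K_i)$ via the upper Lipschitz estimate, invoke the flat case of Clark--Laugesen for each $K_i$ together with $\cH^d(K_i)\le(1+\epsilon)^d\cH^d(E_i)$, recombine with \autoref{pr:subadditivity} after discarding the $F$-term, and let $\epsilon\to 0$. The one place the paper is more careful is the set $F$: it need not be compact, so instead of citing \cite[Theorem 4.3.1]{BHS19} directly for $V_p(F)=\infty$, the paper exhausts $F$ by the compact sets $F_\eta=\{x\in E:\dist(x,\cup_i E_i)\ge\eta\}$ and restricts any putative finite-energy probability measure to some $F_\eta$ of positive measure to reach a contradiction --- a small refinement you should add to your disposal of $F$.
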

The inequality is known already when $E$ is flat, meaning $E \subset \Rd$, by recent work of Clark and Laugesen \cite[Corollary 1.2]{CL24b}. This flat case provides a key ingredient in the following proof. 
\begin{proof}
Let $\epsilon>0$. By definition of strong rectifiability, the set partitions as $E = \left( \cup_{i=1}^m E_i \right) \dot{\cup} F$, with $E_i = \varphi_i(K_i)$ for some compact $K_i\subset \bbR^{d}$ and corresponding bi-Lipschitz function $\varphi_i$ with bi-Lipschitz constant $\le 1+\epsilon$. The intersections have vanishing Hausdorff measure: $\cH^d(E_i \cap E_j)=0$ when $i \neq j$. The set $F$ is lower dimensional, with $\dim(F) < d$, and so 
\[
\cH^d(F)=0 .
\] 

Suppose  $\dim(F) < p < d$. Because $p$ exceeds the dimension of $F$, we know by \cite[Theorem 4.3.1]{BHS19} that every compact subset of $F$ has $p$-capacity zero, that is, has infinite $p$-energy. We show now that $F$ itself (which need not be compact) has infinite $p$-energy in the sense that $\int_F \int_F |x-y|^{-p} \, d\mu d\mu=\infty$ whenever $\mu$ is a probability measure on $F$. For suppose this energy integral is finite; the compact set $F_\eta = \{ x \in E : \dist(x,\cup_{i=1}^m E_i) \geq \eta \} \subset F$ exhausts $F$ as $\eta \searrow 0$ and so $\mu(F_\eta)>0$ for some $\eta$, implying finiteness of the energy integral for the probability measure $\mu(\cdot \cap F_\eta)/\mu(F_\eta)$. Hence $\capp(F_\eta)>0$, which we already observed is not true. Therefore $F$ must have infinite $p$-energy.  

First we prove the proposition for each $E_i$ individually, up to an $\epsilon$-dependent factor. Let $\mu_i$ be a probability measure on $E_i$, so that $\mu = \mu_i \circ \varphi_i$ is a probability measure on $K_i$. Then
\begin{align*}
	V_p(E_i) &= \min_{\mu_i}\int_{E_i} \int_{E_i} \frac{1}{|\tilde{x}-\tilde{y}|^p} \, d\mu_i(\tilde{x}) d\mu_i(\tilde{y})\\
	&= \min_{\mu} \int_{K_i} \int_{K_i}  \frac{1}{|\varphi_i(x)-\varphi_i(y)|^p} \, d\mu(x) d\mu(y)\\
	& \ge \frac{1}{(1+\epsilon)^{p}} \min_{\mu} \int_{K_i} \int_{K_i} \frac{1}{|x-y|^p} \, d\mu(x) d\mu(y)\\
	&= \frac{1}{(1+\epsilon)^{p}} V_p(K_i) 
\end{align*}
where the inequality uses the upper Lipschitz condition. Now we call on the result by Clark and Laugesen \cite[Corollary 1.2]{CL24b} for $K_i\subset \bbR^{d}$, that: 
\[
\liminf_{p\nearrow d}(d-p)V_p(K_i) \ge \frac{|\Sph^{d-1}|}{\cH^{d}(K_i)} .
\]
Meanwhile, \autoref{lm1} for $\varphi_i^{-1}$ (using the lower Lipschitz condition on $\varphi_i$) gives 
\begin{align*}
	\cH^{d}(K_i) \leq (1+\epsilon)^d \cH^{d}(E_i) .
\end{align*}
Combining these inequalities, we obtain for $E_i$ that the proposition holds up to a factor of $(1+\epsilon)^{2d}$:
\begin{align}
	\liminf_{p\nearrow d}(d-p)V_p(E_i) 
	& \ge \frac{|\Sph^{d-1}|}{(1+\epsilon)^d\cH^{d}(K_i)}
	\ge \frac{|\Sph^{d-1}|}{(1+\epsilon)^{2d}\cH^{d}(E_i)}. \label{inq:E_i}
\end{align}
Notice we cannot simply let $1+\epsilon$ tend to $1$ on the right side, because the choice of $E_i$ in our decomposition depends on $\epsilon$.

Next we turn attention to the whole set $E = \left( \cup_{i=1}^m E_i \right) \dot{\cup} F$. Subadditivity of the reciprocal energy (\autoref{pr:subadditivity}) yields that 
\[
	V_p(E) \ge \frac{1}{\sum_{i=1}^m V_p(E_i)^{-1} + V_p(F)^{-1}} .
\]
We showed above that $F$ has infinite $p$-energy and so the term $V_p(F)^{-1}$ in the denominator can be dropped. 

Multiplying by $d-p$ and letting $p \nearrow d$, we see  
\begin{align*}
	\liminf_{p \nearrow d} (d-p) V_p(E)
	& \ge \frac{1}{\sum_{i=1}^m \left(\liminf_{p \nearrow d}(d-p)V_p(E_i) \right)^{-1}}\\
	& \ge \frac{|\bbS^{d-1}|}{(1+\epsilon)^{2d}}\frac{1}{\sum_{i=1}^m \cH^d(E_i)} \qquad \text{by \eqref{inq:E_i} for each $E_i$} \\
	& = \frac{|\bbS^{d-1}|}{(1+\epsilon)^{2d}}\frac{1}{\cH^d(E)} \\
	& \to \frac{|\bbS^{d-1}|}{\cH^d(E)} 
\end{align*}
as $\epsilon \to 0$, which proves the proposition. 
\end{proof}

\section{\bf Proof of \autoref{co:zero}}
$\capp(E)$ is monotonically decreasing with respect to $p$ by Clark and Laugesen \cite[Theorem 1.1]{CL24b} and so the limiting value $\lim_{p \nearrow d} \capp(E)$ exists and is greater than or equal to $\capd(E)$. That limiting value must be zero, because if it were positive then the left side of \eqref{eq:main} in \autoref{th:main} would be infinite whereas the right side is finite. Hence $\capd(E)=0$.

\section{\bf Proof of \autoref{le:rectifiabledensity}}
\label{sec:rectifiabledensityproof}

Let $\epsilon_k = 2^{-k}$ for $k\in \bbN$. For each $k$ we have a bi-Lipschitz decomposition of the strongly rectifiable set $E = \cup_i E_{k,i} \cup F_k$, given as in definition, with constant $L_k<1+\epsilon_k$. Let $A_k = \cup_{i\neq j} (E_{k,i} \cap E_{k,j})$ be the set of intersection points, which has Hausdorff measure $0$. 

Suppose $x\in E\backslash (A_k\cup F_k)$. Then $x$ belongs to only one of the $E_{k,i}$, and since those sets are compact, for sufficiently small $r$ the ball around $x$ with radius $r$ does not intersect any other $E_{k,j}$, $j\neq i$. If $\tilde x$ is the preimage of $x$ under the bi-Lipschitz mapping onto $E_{k,i}$, then
	\begin{align*}
		L_k^{-d} \,\cH^d\left(\bbB^d(\tilde x,r/L_k)\right) 
		\le \cH^d(\bbB^n(x,r)\cap E) \le L_k^d\, \cH^d\left(\bbB^d(\tilde x, L_k r)\right),
	\end{align*}
	for all small $r>0$.
	Therefore,
	\begin{equation}\label{eq:density_recset}
		\begin{split}
		\frac{|\bbB^d|}{(1+\epsilon_k)^{2d}}&\le \liminf_{r\to 0}\frac{\cH^d\left(\bbB^n(x,r)\cap E\right)}{r^d}\\
		&\le \limsup_{r\to 0}\frac{\cH^d\left(\bbB^n(x,r)\cap E\right)}{r^d} \le (1+\epsilon_k)^{2d}|\bbB^d|.
		\end{split}
	\end{equation}
	
	Let $A = \cup A_k$ and $F = \cup F_k$, so that $\cH^d(A\cup F) = 0$. Equation \eqref{eq:density_recset} holds for all $x \in E\backslash (A\cup F)$. Letting $\epsilon_k \to 0$ completes the proof.

\section{\bf Proof of \autoref{th:cap_easydirc}}
\label{sec:easydircproof}

In terms of energy, the theorem claims that  
	\begin{align*}
		\limsup_{p\nearrow d} \, (d-p) V_p(E) \le  \frac{d}{\cH^d(E)^2} \int_E \overline{\sigma}_{d}(\cH^d|_E,x) \, d\cH^d(x) .
	\end{align*}
We begin with G\"{o}tz's formula \eqref{eq:Gotz}, which says 
	\begin{align*}
		V_p(E) 
		& = p \inf_{\mu} \int_E \int_0^\infty \mu(\bbB^n(x,r)) \, r^{-p-1}\, dr \, d\mu(x) \\
		& \leq p \inf_{\mu} \int_E \int_0^1 \mu(\bbB^n(x,r)) \, r^{-p-1}\, dr \, d\mu(x) + 1 
	\end{align*}
	since $\mu(\cdot) \leq \mu(E) = 1$. Choose $\mu(\cdot) = \cH^d(\cdot \cap E) / \cH^d(E)$ to be normalized Hausdorff measure on $E$. Then  
	\begin{align*}
		V_p(E)  &\le \frac{p}{\cH^d(E)^2} \int_E \int_0^1  \cH^d(\bbB^n(x,r) \cap E) \, r^{-p-1} \,dr \, d\cH^d(x)  + 1.
	\end{align*}
	Notice that $(d-p)$ times the inner integral is dominated by the upper Ahlfors $d$-regular constant of $E$:
	\begin{align*}
		(d-p)\int_0^1  \cH^d(\bbB^n(x,r) \cap E) \, r^{-p-1} \,dr
		& \le C(d-p) \int_0^1 r^{d-p-1} \, dr = C.
	\end{align*}
	Hence 
	\begin{align*}
		& \limsup_{p\nearrow d} \, (d-p) V_p(E) \\
		&\le \frac{d}{\cH^d(E)^2} \limsup_{p\nearrow d} \int_E (d-p) \int_0^1 \cH^d(\bbB^n(x,r) \cap E) \, r^{-p-1} \,dr \, d\cH^d(x)\\
		&\leq \frac{d}{\cH^d(E)^2} \int_E \, \limsup_{p \nearrow d} \, (d-p) \int_0^1 \cH^d(\bbB^n(x,r) \cap E) \, r^{-p-1} \,dr \, d\cH^d(x)\\
		& \qquad \qquad \qquad \qquad \qquad \text{by dominated convergence} \\
		&= \frac{d}{\cH^d(E)^2} \int_E \overline{\sigma}_{d}(\cH^d|_E,x) \, d\cH^d(x) 
	\end{align*}
by definition of the second order upper density.

\section*{Acknowledgments}
Laugesen was supported by awards from the Simons Foundation (\#964018) and the National Science Foundation ({\#}2246537). The NSF grant supported Fan too. 

\bibliographystyle{plain}

\end{document}